\numberwithin{equation}{section}
\newtheorem{proposition}{Proposition}[section]
\newtheorem{lemma}[proposition]{Lemma}
\newtheorem{theorem}[proposition]{Theorem}
\newtheorem{corollary}[proposition]{Corollary}
\theoremstyle{definition}
\newtheorem{remark}[proposition]{Remark}
\newtheorem{definition}[proposition]{Definition}
\DeclareMathOperator{\End}{End}
\DeclareMathOperator{\Id}{Id}
\DeclareMathOperator{\GL}{GL}
\DeclareMathOperator{\Aut}{Aut}
\newcommand{\dbar}{\bar\partial}
\newcommand{\scGC}{\mathscr{G}^{\mathbb{C}}}
\newcommand{\scV}{\mathcal{V}}
\newcommand{\rk}{\mathrm{rank}}
\newcommand{\Vol}{\mathrm{Vol}}
\newcommand{\Hom}{\mathrm{Hom}}
\def\R{\mathbb{R}}
\def\Q{\mathbb{Q}}
\def\N{\mathbb{N}}
\def\P{\mathbb{P}}
\def\C{\mathbb{C}}
\def\cC{\mathcal{C}}
\def\cF{\mathcal{F}}
\def\cE{\mathcal{E}}
\def\cO{\mathcal{O}}
\def\cG{\mathcal{G}}
\def\cV{\mathcal{V}}
\def\cL{\mathcal{L}}
\def\del{\partial}
\def\Gr{\mathrm{Gr}}
\def\delb{\overline{\partial}}
\def\rank{\mathrm{rank}}
\def\trace{\mathrm{trace}}
\def\Lie{\mathrm{Lie}}
\def\k{\mathfrak{k}}
\def\g{\mathfrak{g}}
\def\aut{\mathfrak{aut}}
\def\codim{\mathrm{codim}}
\def\Om{\Omega}
\def\om{\omega}
\def\ep{\varepsilon}
\def\>{\rangle}
\def\<{\langle}
\def\>{\rangle}
\title[Blowing-up HYM connections]{Blowing-up hermitian Yang--Mills connections}
\author[A. Clarke]{Andrew Clarke}
\address{Andrew Clarke, Instituto de Matem\'atica, Universidade Federal do Rio de Janeiro, Av. Athos da Silveira Ramos 149, Rio de Janeiro, RJ, 21941-909, Brazil}
\email{andrew@im.ufrj.br} 
\author[C. Tipler]{Carl Tipler}
\address{Carl Tipler, Univ Brest, UMR CNRS 6205, Laboratoire de Math\'ematiques de Bretagne Atlantique, France}
\email{Carl.Tipler@univ-brest.fr}
\subjclass[2010]{Primary: 53C07, Secondary: 53C55, 14J60}
\begin{document}

\begin{abstract}
We investigate hermitian Yang--Mills connections for pullback vector bundles on blow-ups of K\"ahler manifolds along submanifolds. Under some technical asumptions on the graded object of a simple and semi-stable vector bundle, we provide a necessary and sufficent numerical criterion for the pullback bundle to admit a sequence of hermitian Yang--Mills connections for polarisations that make the exceptional divisor sufficiently small, and show that those connections converge to the pulled back hermitian Yang-Mills connection of the graded object.
\end{abstract}

\maketitle 

\section{Introduction}
\label{sec:intro}
A cornerstone in gauge theory is the Hitchin--Kobayashi correspondence (\cite{skobayashi82,lubke83,uhlenbeckyau86,donaldson87}). This celebrated generalisation of the Narasimhan and Seshadri theorem asserts that a holomorphic vector bundle over a K\"ahler manifold carries an Hermite--Einstein metric if and only if it is polystable in the sense of Mumford and Takemoto (\cite{Mum62,Tak72}). The interplay between the differential geometric side, in the form of the hermitian Yang--Mills connections (HYM for short) that originated from physics, and the algebro-geometric side, that of stability notions coming from moduli constructions, has had many applications and become a very fertile source of inspiration. Given that HYM connections are canonically attached to polystable vector bundles, it is natural to investigate their relations to natural maps between vector bundles, such as pullbacks. In this paper, we address the problem of pulling back HYM connections along blow-ups. While the similar problem for extremal K\"ahler metrics has seen many developments in the past ten years \cite{ArPac06,ArPac09,ArPacSing,sze-blowupsII,SeySze,dervan-sektnan-blowups}, relatively little seems to be known about the behaviour of HYM connections under blow-ups \cite{Buchdahl,dervansektnan19}. In this paper, under some mild asumptions, we solve the problem for pullback of {\it semi-stable} vector bundles on blow-ups along smooth centers.

Let $\pi: X' \to X$ be the blow-up of a polarised K\"ahler manifold $(X,[\om])$ along a submanifold $Z \subset X$, and $E'=\pi^*E$ the pullback of a holomorphic vector bundle $E\to X$. For $0<\ep\ll 1$, $L_\ep:=\pi^*[\om] -\ep c_1(Z')$ defines a polarisation on $X'$, where we denote by $Z'=\pi^{-1}(Z)$ the exceptional divisor. There are obstructions for $E'$ to admit HYM connections with respect to $\om_\ep\in c_1(L_\ep)$, with $0<\ep\ll 1$. In particular, $E$ should be {\it simple} and {\it semi-stable} with respect to $[\om]$% (see Section \ref{sec:blowup})
. In the latter case, $E$ admits a Jordan--Holder filtration by semi-stable sheaves with polystable graded object $\Gr(E)$ (see Section \ref{sec:slopestability} for definitions). A further obstruction comes then from subsheaves of $E$ arising from $\Gr(E)$. While those sheaves have the same slope as $E$, their pullbacks to $X'$ could destabilise $E'$. Our main result asserts that, under some technical asumptions on $\Gr(E)$ and on the dimension of $Z$, these are actually the only obstructions for $E'$ to carry a HYM connection. More precisely, from now on, and {\it until the end of the paper, we will assume that $Z\subset X$ satisfies}
$$
\codim(Z)\geq 3.
$$

Recall that a semi-stable holomorphic vector bundle $E\to (X,[\om])$ is said to be {\it sufficiently smooth} if its graded object $\Gr(E)$ is locally free. We will assume further that the Jordan--H\"older filtration with locally-free subsheaves
$$
0=\cF_0\subset \cF_1\subset \ldots\subset \cF_\ell=E 
$$
is unique. For $1\leq i \leq \ell$, denote by $\mu_{L_\ep}(\cF_i)=\frac{c_1(\pi^*\cF_i)\cdot L_\ep^{n-1}}{\rank(\cF_i)}$ the slope of $\pi^*\cF_i$ on $(X', L_\ep)$. In the following statements, when referring to a HYM connection $A$ on $E$ (resp. on $\Gr(E)$, or $\pi^*E$), we will implicitly assume that $A^{0,1}$ is gauge equivalent to the holomorphic connection, or Dolbeault operator, of $E$ (resp. of $\Gr(E)$, or $\pi^*E$).
\begin{theorem}
 \label{theo:intro}
 Let $E\to X$ be a sufficiently smooth semi-stable holomorphic vector bundle on $(X,[\om])$.  Assume that the stable components of $\Gr(E)$ are pairwise non-isomorphic and that $E$ admits a unique Jordan--Holder filtration by locally free subsheaves.
  Then, there exists $\ep_0>0$ and a sequence of HYM connections $(A_\ep)_{\ep\in (0,\ep_0)}$ on $\pi^*E$ with respect to $(\om_\ep)_{\ep\in (0,\ep_0)}$ if and only if
 \begin{equation}
 \label{eq:necessarynumericalcondition}
 \forall\;  i\in [\![1,\ell-1]\!],\: \mu_{L_\ep}(\cF_i)\underset{\ep\to 0}{<} \mu_{L_\ep}(E).
\end{equation}
 In that case, if $A$ denotes a HYM connection on $\Gr(E)$ with respect to $\om$, then $(A_\ep)_{\ep\in(0,\ep_0)}$ can be chosen so that $A_\ep \underset{\ep\to 0}{\longrightarrow} \pi^*A$ in any Sobolev norm.
\end{theorem}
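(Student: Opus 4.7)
My plan is to prove necessity via Hitchin--Kobayashi and sufficiency via a perturbation-plus-reduction argument centered at the pullback of a HYM connection on $\Gr(E)$.

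\emph{Necessity.} If $\pi^*E$ admits a HYM connection with respect to $\om_\ep$, it is $L_\ep$-polystable. Since the blow-up of a smooth center satisfies $\pi_*\cO_{X'} = \cO_X$, the projection formula gives $H^0(X',\End(\pi^*E)) = H^0(X,\End(E)) = \C$, so $\pi^*E$ is simple; being both simple and polystable, it must be stable. For any $\cF \in \mathfrak{E}_{[\om]}$, the pullback $\pi^*\cF$ is a proper torsion-free subsheaf of $\pi^*E$, forcing $\mu_{L_\ep}(\cF) < \mu_{L_\ep}(E)$.

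\emph{Sufficiency.} I would proceed in three steps. First, fix a smooth bundle isomorphism $\pi^*E \cong \pi^*\Gr(E) = \bigoplus_i \pi^*E_i$ and parametrize Hermitian metrics on $\pi^*E$ as $h_s = e^s \cdot \pi^*h_0$, where $h_0$ is the HYM metric on $\Gr(E)$ and $s$ is Hermitian. The HYM equation becomes $\Phi_\ep(s) := i\Lambda_{\om_\ep} F_{h_s} - \lambda_\ep\,\Id_{\pi^*E} = 0$, and $s=0$ is a genuine HYM solution for the degenerate form $\pi^*\om$, hence an approximate solution for $\om_\ep$ with error localized near the exceptional divisor $Z'$. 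Second, I would linearize $\Phi_\ep$ at $s=0$: the linearization is a perturbation of the bundle Laplacian $\Delta_A$ on $\End(\pi^*\Gr(E))$, and the pairwise non-isomorphism hypothesis on the stable factors forces $\aut(\Gr(E)) = \bigoplus_i \C\,\Id_{E_i}$, onto which the kernel and cokernel of $\Delta_A$ converge as $\ep \to 0$ by the Kähler identities. Working in weighted Sobolev spaces on $X'$ modelled near $Z'$ on $\Bl_0\C^k \times \C^{n-k}$, in the spirit of Arezzo--Pacard and Buchdahl, one obtains a right inverse on the orthogonal complement of this finite-dimensional cokernel with norms bounded uniformly in $\ep$. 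Third, project $\Phi_\ep$ onto the cokernel to produce a reduced obstruction $F_\ep : \aut(\Gr(E)) \to \aut(\Gr(E))$; a standard moment-map/GIT computation identifies the leading $\ep$-order of $F_\ep$ with the slope differences $\mu_{L_\ep}(\cF) - \mu_{L_\ep}(E)$ running over $\cF \in \mathfrak{E}_{[\om]}$, so hypothesis \eqref{eq:necessarynumericalcondition} places zero in the interior of the associated moment polytope and thereby provides a zero of $F_\ep$. A quantitative implicit function theorem then yields $s_\ep$ with $\Phi_\ep(s_\ep) = 0$ and $s_\ep \to 0$ in every Sobolev norm, producing $A_\ep \to \pi^*A$ as stated.

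\emph{Main obstacle.} The principal difficulty is the second step: establishing a right inverse with norms uniform in $\ep$. The form $\om_\ep$ collapses to $\pi^*\om$ as $\ep \to 0$, the geometry of $(X',\om_\ep)$ develops a long neck along $Z'$, and eigenvalues of $\Delta_{A,\om_\ep}$ supported by modes concentrated near $Z'$ threaten to merge with the genuine geometric kernel $\aut(\Gr(E))$. Untangling the two, while simultaneously capturing the correct order in $\ep$ of the obstruction $F_\ep$ so that it matches the slope asymptotics, requires careful parametrix gluing between a Burns--Simanca type model on the neck and the exterior region where $\om_\ep$ is close to $\pi^*\om$, and is the technical heart of the argument.
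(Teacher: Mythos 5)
Your necessity argument is fine. The sufficiency argument, however, starts from a false premise in the strictly semi-stable case: $s=0$, i.e.\ the metric $\pi^*h_0$ on $\pi^*E$, is \emph{not} an approximate HYM solution. Writing $\delb_E=\delb_0+\gamma$ with $\gamma$ the (fixed, non-zero) extension data of the Jordan--H\"older filtration, the curvature of the Chern connection of $(\delb_E,\pi^*h_0)$ differs from $\pi^*F_{A_0}$ by terms in $\gamma$ of size $O(1)$, not $O(\ep)$ --- indeed $E$ admits no Hermite--Einstein metric for $\om$ at all. The paper's remedy is to realise $E$ as a point $b$ of a $K$-equivariant Kuranishi family $\Phi:B\to\Om^{0,1}(X,\End(\Gr(E)))$ of deformations of $\Gr(E)$, to solve the equation modulo the cokernel $i\k$ \emph{in family} over $B\times[0,\ep_0)$ (Proposition \ref{prop:perturbingslice}), and then to look for the solution not at $b$ but at a point $b_\ep$ of the orbit $G\cdot b$ which drifts towards $0=[\Gr(E)]$ as $\ep\to0$. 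Your reduced obstruction $F_\ep$ on $\aut(\Gr(E))$ is morally this restricted moment map $\mu_\ep|_{G\cdot b\cap B}$, but without the slice it is not even defined, and the assertion that \eqref{eq:necessarynumericalcondition} ``places zero in the interior of the associated moment polytope'' is precisely what must be proved: the image is not a polytope but, locally, a translated cone, and one needs (i) that $\mu^*_\ep(G\cdot b\cap B)$ contains $\mu^*_\ep(0)+\Int(\sigma_\eta)$ with $\eta$ \emph{uniform in $\ep$} (Proposition \ref{prop:sigma-eta-image-orbit}, resting on \cite{SekTip22} and on $K$ being abelian --- this, not merely the shape of the kernel, is where the pairwise non-isomorphism hypothesis enters), and (ii) that $-\mu^*_\ep(0)\in\Int(\sigma)$ is equivalent to \eqref{eq:necessarynumericalcondition}, which requires matching each generator of the dual cone $\sigma^\vee$ with a subsheaf $\cF_{\ua}\in\mathfrak{E}_{[\omega]}$ (Lemma \ref{lem:dualgenerators}) and a Chern--Weil identity. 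These steps are the substance of the theorem and are missing.

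Separately, the ``main obstacle'' you identify --- a uniform right inverse via weighted Sobolev spaces modelled on $\Bl_0\C^k\times\C^{n-k}$, Burns--Simanca necks and parametrix gluing --- is not how the paper proceeds and is avoidable. Since $\om_\ep=\pi^*\om+\ep\lambda$ degenerates only along the fibres of $Z'\to Z$, the paper splits $\Gamma(X',\End E)=\Gamma(X,\End E)\oplus\Gamma_0(Z',\End E)$ by fibre-averaging and uses the weighted ansatz $s=\pi^*s_X+\ep\,\iota_*s_Z$; the linearised operator is then triangular with diagonal entries $\Delta_X$ and the invertible vertical Laplacian $\Delta_{\cV}$, so the cokernel is exactly $i\k\times\{0\}$ uniformly in $\ep$ and the ordinary implicit function theorem applies with no gluing and no model metric.
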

 The expression $\mu_{L_\ep}(\cF)\underset{\ep\to 0}{<} \mu_{L_\ep}(E)$ means that the first non-zero term in the $\ep$-expansion for $\mu_{L_\ep}(E)-\mu_{L_\ep}(\cF)$ is strictly positive. The conclusion of Theorem \ref{theo:intro} can be rephrased as follows. If $\delb_{\Gr(E)}$ (resp. $\delb_E$) stands for the Dolbeault operator of $\Gr(E)$ (resp. of $E$) and if $h$ is a Hermite--Einstein metric on $\Gr(E)$ with respect to $\om$, then there exist gauge transformations $(f_\ep)_{0<\ep<\ep_0}$ on $X'$ such that the Chern connections of $(f_\ep^*\pi^*\delb_E,\pi^*h)$ are HYM with respect to $(\om_\ep)_{0<\ep<\ep_0}$ and converge to the Chern connection of $(\pi^*\delb_{\Gr(E)},\pi^*h)$ in any Sobolev norm.% (see Section \ref{sec:HYM} for definitions of gauge transformations and their action).
 \begin{remark}
  The simplicity of $E$ is implied by the hypotheses made on $\Gr(E)$ (see \cite[Lemma 34]{SekTip}). Semi-stability and condition (\ref{eq:necessarynumericalcondition}) are also necessary to produce the connections $(A_\ep)$ from Theorem \ref{theo:intro}. The other three asumptions on $\Gr(E)$ are technical. Assuming $\Gr(E)$ to be locally free enables one to regard $E$ as a smooth complex deformation of $\Gr(E)$ and to work with the various connections on the same underlying complex vector bundle. We should warn the reader though that if one drops this asumption, Condition (\ref{eq:necessarynumericalcondition}) may not be enough to ensure semi-stability of $\pi^*E$ on $(X',L_\ep)$ (see the extra conditions in \cite[Theorem 1.10]{NapTip}). On the other hand, the asumption on $\Gr(E)$ having no pairwise isomorphic components is purely technical, and ensures that its automorphism group, which will provide obstructions in the perturbative theory, is abelian. Finally, the fact that the Jordan--Holder filtration is unique provides a special shape in the extension matrix from $\Gr(E)$ to $E$, with non-zero terms above the diagonal. This is used to produce approximate solutions by induction on the number of stable components of $\Gr(E)$.
 \end{remark}
We now list some corollaries of Theorem \ref{theo:intro}. First, the stable case :
\begin{corollary}
 \label{cor:stablecase}
 Let $E\to X$ be a stable holomorphic vector bundle on $(X,[\om])$ and let $A$ be a HYM connection on $E$ with respect to $\om$. Then, there exists $\ep_0>0$ and a sequence of HYM connections $(A_\ep)_{\ep\in (0,\ep_0)}$ on $\pi^*E$ with respect to $(\om_\ep)_{\ep\in (0,\ep_0)}$ such that $A_\ep \underset{\ep\to 0}{\longrightarrow} \pi^*A$ in any Sobolev norm.
\end{corollary}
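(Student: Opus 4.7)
The plan is to observe that a stable bundle satisfies every hypothesis of Theorem \ref{theo:intro} automatically, and then to invoke the theorem. As the corollary is a direct specialisation, I do not expect any serious obstacle; the work consists in unpacking definitions.

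First I would recall the standard facts that stability implies simplicity and semi-stability, and that a stable bundle admits no proper subsheaf of the same slope. Consequently its Jordan--H\"older filtration is $0\subset E$, so $\Gr(E)\cong E$ is locally free and $E$ is sufficiently smooth in the sense of the paper. Moreover $\Gr(E)$ has a single stable component, so the assumption that the stable components of $\Gr(E)$ are pairwise non-isomorphic is vacuous.

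The same stability argument shows that $\mathfrak{E}_{[\om]}$ reduces to at most $\{E\}$, in which case both sides of the inequality in (\ref{eq:necessarynumericalcondition}) coincide identically in $\ep$ and the numerical condition is trivially satisfied. If one adopts the convention that $E$ itself is excluded from $\mathfrak{E}_{[\om]}$, then $\mathfrak{E}_{[\om]}=\emptyset$ and (\ref{eq:necessarynumericalcondition}) is vacuously true; either way the hypothesis holds.

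Theorem \ref{theo:intro} then yields $\ep_0>0$ and a sequence of HYM connections $(A_\ep)_{\ep\in(0,\ep_0)}$ on $\pi^*E$ with respect to $\om_\ep$. Applying the convergence clause of the theorem to the given HYM connection $A$ on $\Gr(E)=E$ produces $A_\ep \to \pi^*A$ in every Sobolev norm, which is exactly the content of the corollary.
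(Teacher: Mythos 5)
Your proposal is correct and follows exactly the paper's own (one-line) argument: the corollary is a direct specialisation of Theorem \ref{theo:intro} to the case $E=\Gr(E)$ with a single stable component, all hypotheses being automatic and condition (\ref{eq:necessarynumericalcondition}) vacuous for proper subsheaves. The only nitpick is that when both sides of (\ref{eq:necessarynumericalcondition}) coincide identically the strict inequality is \emph{not} ``trivially satisfied'' in the paper's sense (there is no first non-zero term), so one must read $\mathfrak{E}_{[\om]}$ as consisting of proper non-trivial subbundles, consistent with Definition \ref{def:stability} --- which you correctly note as the alternative.
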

For the semi-stable case, Condition (\ref{eq:necessarynumericalcondition}) reduces to a finite number of intersection product computations. One interesting feature comes from the second term in the expansion of $\mu_{L_\ep}(E)$. It is the opposite of the slope of the restriction of $E$ to $Z$. The following formula is proved in \cite[Section 4.1]{NapTip},  where $m=\dim(Z)$ :
\begin{equation}
 \label{eq:formulaAchim}
\mu_{L_\ep}(E)= \mu_{L}(E) - \dbinom{n-1}{m-1} \mu_{L_{\vert Z}}(E_{|Z}) \ep^{n-m} + O(\ep^{n-m+1}).
\end{equation}
We then have :
\begin{corollary}
 \label{cor:restrictionnumericalcriterion}
 Let $E\to X$ be a sufficiently smooth semi-stable holomorphic vector bundle on $(X,[\om])$. Assume that the stable components of $\Gr(E)$ are pairwise non-isomorphic and that the Jordan--Holder filtration with locally free subsheaves is unique. Denote by $A$ an HYM connection on $\Gr(E)$ with respect to $\om$. If
 \begin{equation}
 \label{eq:necessarynumericalconditionbis}
 \forall\; i\in [\![ 1,\ell-1]\!],\: \mu_{L_{\vert Z}}(E_{\vert Z})<\mu_{L_{\vert Z}}((\cF_i)_{\vert Z}),
\end{equation}
  then, there exists $\ep_0>0$ and a sequence of HYM connections $(A_\ep)_{\ep\in (0,\ep_0)}$ on $\pi^*E$ with respect to $(\om_\ep)_{\ep\in (0,\ep_0)}$ converging to $\pi^*A$ in any Sobolev norm.
\end{corollary}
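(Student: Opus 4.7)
The plan is to reduce Corollary \ref{cor:restrictionnumericalcriterion} directly to Theorem \ref{theo:intro} by verifying that hypothesis \eqref{eq:necessarynumericalconditionbis} implies hypothesis \eqref{eq:necessarynumericalcondition}. All other assumptions (simplicity, sufficient smoothness, semi-stability of $E$, and pairwise non-isomorphic stable components of $\Gr(E)$) are identical between the two statements, so they carry over without comment, and once \eqref{eq:necessarynumericalcondition} is established the conclusion on $(A_\ep)$ and its Sobolev convergence to $\pi^*A$ follows immediately from Theorem \ref{theo:intro}.

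To carry out the reduction, fix an arbitrary $\cF \in \mathfrak{E}_{[\omega]}$. Since $\mathfrak{E}_{[\omega]}$ is by definition the set of subbundles of $E$ with the same $[\omega]$-slope as $E$, we have $\mu_L(\cF) = \mu_L(E)$. Because $E$ is sufficiently smooth, $\Gr(E)$ is locally free and hence so is every $\cF \in \mathfrak{E}_{[\omega]}$; in particular $\cF_{|Z}$ is a well-defined holomorphic vector bundle on $Z$, so formula \eqref{eq:formulaAchim} applies separately to $E$ and to $\cF$. Subtracting the two expansions and using cancellation of the leading order term yields
\[
\mu_{L_\ep}(E) - \mu_{L_\ep}(\cF)
= \binom{n-1}{m-1}\bigl(\mu_{L_{|Z}}(\cF_{|Z}) - \mu_{L_{|Z}}(E_{|Z})\bigr)\,\ep^{n-m} + O(\ep^{n-m+1}).
\]
Under the hypothesis \eqref{eq:necessarynumericalconditionbis}, the coefficient of $\ep^{n-m}$ is strictly positive, so the first non-zero term in the $\ep$-expansion of $\mu_{L_\ep}(E) - \mu_{L_\ep}(\cF)$ is positive. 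This is precisely the meaning of $\mu_{L_\ep}(\cF) \underset{\ep\to 0}{<} \mu_{L_\ep}(E)$, i.e.\ condition \eqref{eq:necessarynumericalcondition}. Since $\cF \in \mathfrak{E}_{[\omega]}$ was arbitrary, Theorem \ref{theo:intro} applies and produces the sequence $(A_\ep)_{\ep \in (0,\ep_0)}$ with the required convergence.

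There is essentially no technical obstacle in this argument beyond the bookkeeping above: the whole content is the elementary observation that for subbundles with matching $[\omega]$-slope, the sign of the leading correction in the $\ep$-expansion of $\mu_{L_\ep}$ is governed purely by slopes on $Z$. The only point requiring a brief check is that the invocation of \eqref{eq:formulaAchim} is legal for every $\cF \in \mathfrak{E}_{[\omega]}$, which is precisely where the sufficient smoothness hypothesis enters.
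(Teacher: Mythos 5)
Your proposal is correct and is exactly the paper's argument: the paper's proof of this corollary consists of the single remark that it ``follows directly, using Formula (\ref{eq:formulaAchim})'', and your write-up simply spells out the subtraction of the two expansions and the cancellation of the leading terms via $\mu_L(\cF)=\mu_L(E)$. The only cosmetic point worth noting is that Theorem \ref{theo:intro} gives convergence to $\pi^*A$ for $A$ a HYM connection on $\Gr(E)$, so the connection $A$ in the corollary's statement should be read as living on $\Gr(E)$ (a slight imprecision already present in the paper, not introduced by you).
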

Condition (\ref{eq:necessarynumericalconditionbis}) was checked on explicit examples in \cite[Section 4.5]{NapTip} to produce stable perturbations of tangent sheaves by blow-ups, and our result provides information on the associated connections and their asymptotic behaviour. Note that by the Mehta--Ramanathan theorem \cite{MeRa}, if $[\om]=c_1(L)$ is integral, and if $Z$ is a generic intersection of divisors in linear systems $\vert L^k \vert$, then $E_{\vert Z}$ is semi-stable as soon as $E$ is. In that case, Condition (\ref{eq:necessarynumericalconditionbis}) cannot be satisfied, and it seems unlikely that Condition (\ref{eq:necessarynumericalcondition}) will hold true. Hence, blowing-up such subvarieties tend to destabilise a semi-stable bundle.

In general, we expect that it should not be too hard to obtain stability of sufficiently smooth pulled back bundles under condition (\ref{eq:necessarynumericalcondition}) with purely algebraic methods. However, we emphasize that the Hitchin--Kobayashi correspondence doesn't provide any information on the asymptotic behaviour of the associated HYM connections, which is then the main content of Theorem \ref{theo:intro}. Nevertheless, we state the following corollary, that extends \cite[Theorem 1.10]{NapTip} to a non-equivariant situation:

\begin{corollary}
 \label{cor:semistablecase}
 Let $E\to X$ be a sufficiently smooth semi-stable holomorphic vector bundle on $(X,[\om])$.  Assume that the stable components of $\Gr(E)$ are pairwise non-isomorphic and that the Jordan--Holder filtration with locally free subsheaves is unique.
  Then, there exists $\ep_0>0$ such that $\pi^*E\to (X',L_\ep)$ is 
  \begin{enumerate}
   \item[(i)] stable if and only if for all $i\in [\![ 1,\ell-1]\!],\: \mu_{L_\ep}(\cF_i)\underset{\ep\to 0}{<} \mu_{L_\ep}(E),$
   \item[(ii)] semi-stable if and only if for all $i\in [\![ 1,\ell-1]\!],\: \mu_{L_\ep}(\cF_i)\underset{\ep\to 0}{\leq} \mu_{L_\ep}(E),$
   \item[(iii)] unstable otherwise.
  \end{enumerate}
\end{corollary}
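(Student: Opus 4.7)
The plan is to deduce all three equivalences from Theorem \ref{theo:intro}, the Hitchin--Kobayashi correspondence, and an analysis of arbitrary saturated subsheaves of $\pi^*E$. The easy directions use only slope inequalities; the hard direction is to show that any instability of $\pi^*E$ must come from $\pi^*\cF$ for some $\cF\in\mathfrak{E}_{[\om]}$.

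Part (i) is essentially immediate. The forward direction applies the $L_\ep$-stability inequality to $\pi^*\cF$, while for the reverse direction Theorem \ref{theo:intro} supplies an HYM connection on $\pi^*E$, giving $L_\ep$-polystability by Hitchin--Kobayashi; simplicity of $\pi^*E$ (inherited from $E$ via $\pi_*\cO_{X'}=\cO_X$, whence $H^0(X',\End(\pi^*E))=H^0(X,\End(E))=\C$) promotes polystability to stability. The forward direction of (ii) is analogous, and the reverse direction of (iii) is immediate since any $\cF$ violating the numerical condition yields a destabilizing $\pi^*\cF$. By mutual exclusivity of the three cases, only the forward direction of (iii) remains.

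Given $\pi^*E$ that is $L_\ep$-unstable along a sequence $\ep_n\to 0$, we wish to produce $\cF\in\mathfrak{E}_{[\om]}$ with $\mu_{L_\ep}(\cF)\underset{\ep\to 0}{>}\mu_{L_\ep}(E)$. For each $n$ choose a saturated destabilizing subsheaf $\cG_n\subset\pi^*E$ at $L_{\ep_n}$. By the projection formula the leading $\ep=0$ term of $\mu_{L_\ep}(\cG_n)$ equals $\mu_{[\om]}(\pi_*\cG_n)\le\mu_{[\om]}(E)$, where the inequality uses semi-stability of $E$. A uniform slope-gap argument rules out strict inequality: the realized $[\om]$-slopes of rank-$r$ saturated subsheaves of $E$ form a discrete subset of $\Q$ bounded above by $\mu_{[\om]}(E)$, with a positive gap $\delta>0$; combining this with Mumford--Takemoto boundedness to uniformly control the higher $\ep$-coefficients of $\mu_{L_\ep}(\cG_n)-\mu_{L_\ep}(\pi^*E)$, a strict drop by at least $\delta$ at leading order would force $\mu_{L_\ep}(\cG_n)<\mu_{L_\ep}(\pi^*E)$ below a uniform $\ep$-threshold, contradicting destabilization along $\ep_n\to 0$. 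Hence $\cF_n:=(\pi_*\cG_n)^{\mathrm{sat}}$ has the same $[\om]$-slope as $E$.

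Sufficient smoothness forces such $\cF_n$ to be locally free (its associated graded is a direct summand of the locally free $\Gr(E)$), so $\cF_n\in\mathfrak{E}_{[\om]}$; this set is finite (parametrised by subsets of the pairwise non-isomorphic stable summands of $\Gr(E)$), so after extraction $\cF_n=\cF$ is constant. Since $\cF$ is a subbundle, $E/\cF$ is locally free and $\pi^*\cF$ is saturated in $\pi^*E$; two saturated subsheaves of a torsion-free sheaf that agree on a dense open subset of $X'$ must coincide, so $\cG_n=\pi^*\cF$. Polynomiality in $\ep$ then yields $\mu_{L_\ep}(\cF)\underset{\ep\to 0}{>}\mu_{L_\ep}(E)$, which is case (iii). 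The main obstacle is the uniform slope-gap step; without it the leading-order push-forward slopes $\mu_{[\om]}(\pi_*\cG_n)$ might accumulate at $\mu_{[\om]}(E)$ from below as $n\to\infty$, preventing the reduction to a fixed $\cF\in\mathfrak{E}_{[\om]}$.
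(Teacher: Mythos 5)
Your logical skeleton is fine: the only-if directions follow from mutual exclusivity of the three numerical alternatives, the easy implications are handled correctly (including deducing stability rather than mere polystability from simplicity of $\pi^*E$), and everything reduces to showing that instability along a sequence $\ep_n\to 0$ forces some $\cF\in\mathfrak{E}_{[\omega]}$ to destabilise. But the route you take for that hard implication is not the paper's, and it has a genuine gap precisely at the step you flag as the main obstacle --- though the gap is not where you locate it. The slope gap for saturated subsheaves of $E$ of fixed rank is indeed available (degrees against the integral class $L$ are integers), so the leading coefficient is under control. What is unjustified is the ``uniform control of the higher $\ep$-coefficients of $\mu_{L_\ep}(\cG_n)$''. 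Writing $c_1(\cG_n)=\pi^*\alpha_n+a_n[Z']$ with $a_n\in\Z$, the coefficient of $\ep^{n-m-1}$ in $\deg_{L_\ep}(\cG_n)$ is a fixed nonzero multiple of $a_n$, and nothing in your argument bounds $a_n$ independently of $n$. Grothendieck/Mumford--Takemoto boundedness controls the family of destabilisers for a \emph{fixed} ample polarisation; here the polarisations $L_{\ep_n}$ degenerate to the nef but non-ample class $\pi^*L$, which is exactly the regime where such boundedness fails to be uniform. Without a bound on $a_n$, a leading-order deficit of $\delta$ need not dominate the $O(\ep)$ terms below a threshold independent of $n$, and the reduction to a fixed $\cF\in\mathfrak{E}_{[\omega]}$ collapses. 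The Remark following Theorem \ref{theo:intro} is an explicit warning that this purely algebraic route is delicate (compare the extra hypotheses needed in \cite[Theorem 1.10]{NapTip}); the introduction even says the authors ``expect'' an algebraic proof but do not supply one.

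The paper instead deduces the corollary from the moment-map machinery already built: in the remaining case (all inequalities non-strict, at least one equality), the computation of Section \ref{sec:solving} shows that $-\mu_\ep^*(0)$ lies on the boundary of the cone $\sigma$, so Proposition \ref{prop:sigma-eta-image-orbit} produces a zero of $\mu_\ep$ at a boundary point $b'\in\ocZ$ of the orbit closure; by the Hitchin--Kobayashi correspondence this is an $L_\ep$-polystable bundle admitting a small complex deformation to $\pi^*E$, and openness of semi-stability then gives semi-stability of $\pi^*E$ itself. If you want to keep your approach you must prove the uniform boundedness statement (a bound on the $[Z']$-component of $c_1$ of saturated $L_{\ep_n}$-destabilising subsheaves of $\pi^*E$, uniform as $\ep_n\to 0$); that is a substantive lemma, not a citation.
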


Finally, we comment on previous related works. Theorem \ref{theo:intro} extends results from \cite{Buchdahl,dervansektnan19} where blow-ups of HYM connections along points are considered. In the present paper, we consider blow-ups along any smooth subvariety, and also cover the semi-stable situation, which is technically more involved due to the presence of automorphisms of the graded object that obstruct the linear theory. While  \cite{dervansektnan19} is a gluing construction of a similar vein to the various solutions to the analogous problem of producing extremal K\"ahler metrics on blow-ups \cite{ArPac09,ArPacSing,sze-blowupsII,SeySze,dervan-sektnan-blowups}, one of the key feature in our approach is to directly apply  the quantitative implicit function theorem, closely following the method developed in \cite{SekTip}. The main new technical input is in Section \ref{sec:blowup}, where we perform a precise study of the linear operators involved in relation to the geometry of the blow-up. The rest of the argument, in Sections \ref{sec:approximate} and \ref{sec:perturbation}, is more standard, and follows the work of Sektnan and the second author \cite{SekTip}.

\subsection*{Outline:} In Section \ref{sec:setup}, we recall basic material about HYM connections and stability. We then perform in Section \ref{sec:blowup} the analysis of the linear theory on the blow-up. Relying on this, in Section \ref{sec:approximate} we explain how to produce approximate solutions to the HYM equations. This section, and the one that follows, rely on the treatment in \cite{SekTip}. Then, we perturb those approximate solutions in Section \ref{sec:perturbation} to actual solutions, which concludes the proof of Theorem \ref{theo:intro}. The corollaries are addressed in Section \ref{sec:corol}.

\subsection*{Acknowledgments:} The authors benefited from visits to LMBA and Gotheborg University; they would like to thank these welcoming institutions for providing stimulating work environments. The idea of this project emerged from discussions with Lars Martin Sektnan, whom we thank for sharing his ideas and insight. AC is partially supported by the grants {BRIDGES ANR--FAPESP ANR-21-CE40-0017} and { Projeto CAPES - PrInt UFRJ 88887.311615/2018-00}. CT is partially supported by the grants MARGE ANR-21-CE40-0011 and BRIDGES ANR--FAPESP ANR-21-CE40-0017.

\section{Preliminaries}
\label{sec:setup}
In Sections \ref{sec:HYM} and \ref{sec:slopestability} we introduce the notions of HYM connections and slope stability, together with some general results, and refer the reader to \cite{skobayashi87} and \cite{HuLe}. From Section \ref{sec:blowup} we start to specialise the discussion to blow-ups. In particular, in Section \ref{sec:decomposeLaplace}, we provide various asymptotic expressions for the linearisation of the HYM equation on the blow-up. Those results will be used in Section \ref{sec:perturbation}.

\subsection{The hermitian Yang--Mills equation}
\label{sec:HYM}

Let $E \to X$ be a holomorphic vector bundle over a compact K\"ahler manifold $X$. A hermitian metric on $E$ is \textit{Hermite--Einstein} with respect to a K\"ahler metric with K\"ahler form $\omega$ if the curvature $F_h \in \Omega^2 \left(X, \End E \right)$ of the corresponding Chern connection satisfies
\begin{align}\label{HEeqn} \Lambda_{\om} \left( iF_h \right) = c \Id_E
\end{align}
for some real constant $c$. Equivalently, if $h$ is some hermitian metric on the smooth complex vector bundle underlying $E$, a hermitian connection $A$ on $(E,h)$ is said to be \textit{hermitian Yang--Mills} if it satisfies
\begin{equation*}
\left\{
\begin{array}{ccc}
F_A^{0,2} & = & 0, \\
\Lambda_{\om} \left( i F_A \right) & = & c \Id_E.
\end{array}
\right.
\end{equation*}
The first equation of this system implies that the $(0,1)$-part of $A$ determines a holomorphic structure on $E$, while the second that $h$ is Hermite--Einstein for this holomorphic structure. We will try to find hermitian Yang--Mills connections within the complex gauge group orbit, which we now define. The \textit{complex gauge group} is
$$ 
\scGC(E) = \Gamma \left( \GL \left(E, \mathbb{C} \right) \right). 
$$
We note that the set of unitary gauge transformations preserves the metric $h$ so, taking account of the fibre-wise polar decomposition of an element of $\scGC(E)$, we consider the bundle $\End_H(E,h)$ of Hermitian endomorphisms of $E$ and the set
\begin{eqnarray*}
\mathcal{G}^\mathbb{C}(E,h):=\mathcal{G}^\mathbb{C}(E)\cap \Gamma(\End_H(E,h))=\{e^s\ :\ s\in\Gamma(\End_H(E,h))\}.
\end{eqnarray*}
If $\dbar$ is the Dolbeault operator defining the holomorphic structure on $E$, then $ f \circ \dbar \circ f^{-1}$ defines a biholomorphic complex structure on $E$. Let $d_A = \partial_A + \dbar_A$ be the Chern connection of $(E,h)$ with respect to the original complex structure (that is $\dbar_A = \dbar$). Then the Chern connection $A^f$ of $h$ with respect to $f \circ \dbar \circ f^{-1}$ is 
$$
d_{A^f} = (f^*)^{-1} \circ \partial_A \circ (f^*) + f \circ \dbar \circ f^{-1}.
$$
Solving the hermitian Yang--Mills equation is equivalent to solving $$\Psi (s) = c \Id_E$$
where 
$$
\begin{array}{cccc}
\Psi :  & \Gamma(\End_H(E,h)) &  \longrightarrow  & \Gamma(\End_H(E,h)),\\
 & s & \longmapsto & i\Lambda_\om(F_{A^{\exp(s)}}),
\end{array}
$$
and where $\Gamma(\End_H(E,h))$ is the tangent space to $\scGC(E,h)$ at the identity. For a connection $A$ on $E$, the Laplace operator $\Delta_{A}$ is
\begin{align}\label{laplaceop}  \Delta_A =  i \Lambda_{\omega} \left( \bar \partial_{A}  \partial_{A} -  \partial_{A} \bar \partial_{A} \right).
\end{align}
If $A_{\End E}$ denote the connection induced by $A$ on $\End E$, then :
\begin{lemma}\label{lem:linop} 
If $A$ is the Chern connection of $(E,\delb,h)$, the differential of $\Psi$  at identity is $$ d \Psi_{\Id_E} = \Delta_{A_{\End E}}.$$
 If moreover $A$ is assumed to be hermitian Yang--Mills, then the kernel of $ \Delta_{A_{\End E}}$ acting on $\Gamma(\End(E))$ is given by the Lie algebra $\aut(E)$ of the space of automorphisms $\Aut(E)$ of $(E,\delb)$.
\end{lemma}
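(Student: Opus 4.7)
The argument splits in two: computing the linearization, then identifying its kernel. For the first, my plan is to linearize $\Psi(ts)=i\Lambda_\om F_{A^{\exp(ts)}}$ at $t=0$ for an hermitian $s$. Using $(\exp(ts))^*=\exp(ts)$ in the formula $d_{A^f}=(f^*)^{-1}\partial_A f^*+f\bar\partial f^{-1}$ recalled above, a direct computation gives the variation of the Chern connection $\dot A = \partial_A s - \bar\partial_A s$. Then $\dot F = d_A\dot A$, and since the Chern connection of a holomorphic bundle satisfies $F_A^{2,0}=F_A^{0,2}=0$, the $\partial_A^2$ and $\bar\partial_A^2$ contributions drop out, leaving $\dot F = \bar\partial_A \partial_A s - \partial_A \bar\partial_A s$. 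Applying $i\Lambda_\om$ recovers precisely $\Delta_{A_{\End E}}s$ by the definition \eqref{laplaceop}. This part is essentially mechanical.

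For the kernel statement, I would first establish the Bochner-type decomposition
\[
\Delta_{A_{\End E}}=\partial_{A_{\End E}}^{*}\partial_{A_{\End E}}+\bar\partial_{A_{\End E}}^{*}\bar\partial_{A_{\End E}}
\]
by applying the K\"ahler identities $[\Lambda,\partial_A]=i\bar\partial_A^*$ and $[\Lambda,\bar\partial_A]=-i\partial_A^*$ to the one-forms $\partial_A s$ and $\bar\partial_A s$, noting that $\Lambda\partial_A s$ and $\Lambda\bar\partial_A s$ both vanish for degree reasons. Integration by parts on the compact K\"ahler manifold $X$ then identifies $\ker \Delta_{A_{\End E}}$ with the sections simultaneously $\partial_{A_{\End E}}$- and $\bar\partial_{A_{\End E}}$-closed. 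The inclusion $\ker\Delta_{A_{\End E}}\subseteq \aut(E)$ follows from the $\bar\partial$-closed half, together with the identification $\aut(E)=H^0(X,\End E)$, valid since $\Aut(E)$ is open in this finite-dimensional vector space.

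The heart of the argument, and the step I expect to require the most care, is the reverse inclusion: showing that every holomorphic endomorphism is automatically $\partial_{A_{\End E}}$-closed. This is where the HYM hypothesis really enters. My plan is to invoke the Bochner--Kodaira--Nakano identity on $\End E$,
\[
\Delta_{\bar\partial_{A_{\End E}}}-\Delta_{\partial_{A_{\End E}}}=\bigl[i\Lambda_\om F_{A_{\End E}},\,\cdot\,\bigr],
\]
and then use $F_{A_{\End E}}=[F_A,\,\cdot\,]$ together with $i\Lambda_\om F_A=c\,\Id_E$ (which commutes with every section of $\End E$) to conclude that the right-hand side vanishes. Pairing with $s$ and integrating then yields $\|\bar\partial_{A_{\End E}}s\|^2=\|\partial_{A_{\End E}}s\|^2$, whence $\bar\partial_{A_{\End E}}s=0$ forces $\partial_{A_{\End E}}s=0$, so that $\ker\Delta_{A_{\End E}}=\aut(E)$ as required.
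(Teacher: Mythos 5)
Your proof is correct and follows essentially the same route as the paper: the paper's (very terse) justification is precisely that the K\"ahler identities give $\Delta_{A_{\End E}}=\partial_A^*\partial_A+\bar\partial_A^*\bar\partial_A$ and that the Akizuki--Nakano (Bochner--Kodaira--Nakano) identity, combined with $i\Lambda_\om F_{A_{\End E}}=[c\,\Id_E,\cdot\,]=0$, forces the two terms to agree, so the kernel is $H^0(X,\End E)=\aut(E)$. Your computation of the linearisation and your two-inclusion organisation of the kernel argument simply spell out details the paper leaves implicit.
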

The last statement about the kernel follows from the K\"ahler identities and the Akizuki-Nakano identity that imply $\Delta_{A_{\End E}}=\partial^*_{A}\partial_{A}+\bar\partial_{A}^*\bar\partial_{A}$, the two terms of which are equal if $A$ is Hermitian Yang-Mills. The operator $\Delta_{A_{\End E}}$ being elliptic and self-adjoint, $\aut(E)$ will then appear as a cokernel in the linear theory for perturbations of hermitian Yang--Mills connections.

\subsection{Slope stability}
\label{sec:slopestability}
We recall some basic facts about slope stability, as introduced by \cite{Mum62,Tak72}, and refer the interested reader to \cite{HuLe} for a detailed treatment. We denote here $L:=[\om]$ the polarisation of the $n$-dimensional K\"ahler manifold $X$.
\begin{definition}
\label{def:stability}
For $\cE$ a torsion-free coherent sheaf on $X$, the slope $\mu_L(\cE)\in\Q$ (with respect to $L$) is given by the intersection formula
 \begin{equation}
  \label{eq:slope}
  \mu_L(\cE)=\frac{\deg_L(\cE)}{\rank(\cE)},
 \end{equation}
 where $\rank(\cE)$ denotes the rank of $\cE$ while $\deg_L(\cE)=c_1(\cE)\cdot L^{n-1}$ stands for its degree. Then, $\mathcal{E}$ is said to be \emph{slope semi-stable} (resp. \emph{slope stable}) with respect to $L$ if for any coherent subsheaf $\mathcal{F}$ of $\mathcal{E}$ with $0<\rank( \cF)<\rank(\cE)$, one has
\begin{eqnarray*}
\mu_L(\mathcal{F})\leq \mu_L(\mathcal{E}) \:\textrm{( resp. } \mu_L(\mathcal{F})< \mu_L(\mathcal{E}) ).
\end{eqnarray*}
A direct sum of slope stable sheaves of the same slope
 is said to be \emph{slope polystable}.
\end{definition}
In this paper, we will often omit ``slope'' and simply refer to stability of a sheaf, the polarisation being implicit. We will make the standard identification of a holomorphic vector bundle $E$ with its sheaf of sections, and thus talk about slope stability notions for vector bundles as well. In that case slope stability relates nicely to differential geometry via the Hitchin--Kobayashi correspondence :
\begin{theorem}[\cite{skobayashi82,lubke83,uhlenbeckyau86,donaldson87}]
\label{thm:HKcorrespondence}
There exists a Hermite--Einstein metric on $E$ with respect to $\omega$ if and only if $E$ is polystable with respect to $L$
\end{theorem}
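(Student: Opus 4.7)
The plan is to treat the two implications separately. For the ``only if'' direction, suppose $E$ admits a Hermite--Einstein metric $h$ satisfying $\Lambda_{\om}(iF_h) = c \Id_E$. Given any torsion-free coherent subsheaf $\cF \subset E$ with $0 < \rank(\cF) < \rank(E)$, I would consider the $L^2$-orthogonal projection $\pi \in L^2_1(\End E)$ onto $\cF$, characterised weakly by $\pi^2 = \pi = \pi^*$ and $(\Id_E - \pi)\dbar \pi = 0$ almost everywhere. A Chern--Weil computation using the Gauss equation for the second fundamental form yields
$$\deg_L(\cF) \;=\; \int_X \tr\bigl(i\pi \,\Lambda_{\om} F_h\bigr)\,\frac{\om^n}{n!} \;-\; \int_X |\dbar \pi|^2\,\frac{\om^n}{n!}.$$
The Hermite--Einstein condition collapses the first integral to a multiple of $c \,\rank(\cF)$, so $\mu_L(\cF) \leq \mu_L(E)$, with equality forcing $\dbar \pi = 0$. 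In the equality case, $\pi$ is then holomorphic, yielding an orthogonal holomorphic splitting $E = \cF \oplus \cF^{\perp}$, and both summands inherit the Hermite--Einstein condition with the same constant $c$. Induction on $\rank(E)$ delivers polystability.

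For the converse, assume $E$ is polystable; by splitting $E$ into stable summands of equal slope, each of which carries its own Hermite--Einstein metric whose direct sum solves the equation on $E$, it suffices to treat the stable case. I would use the Uhlenbeck--Yau continuity method: fix a background metric $h_0$ and consider, for $\varepsilon \in (0,1]$, the perturbed equation
$$\Lambda_{\om}\bigl(iF_{h}\bigr) \;-\; c\,\Id_E \;+\; \varepsilon\, \log(h_0^{-1} h) \;=\; 0,$$
for $h = h_0\, e^s$ with $s \in \Gamma(\End_H(E, h_0))$. At $\varepsilon = 1$ the linearisation at $s=0$ is $\Delta_{A_0} + \varepsilon\,\mathrm{Id}$, which is invertible, so a solution $s_1$ exists; by the implicit function theorem applied to Lemma~\ref{lem:linop}, the set of $\varepsilon$ with a solution is open in $(0,1]$. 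Closedness reduces to an a priori $C^0$ bound on $s_\varepsilon$, from which elliptic bootstrapping gives higher regularity and allows one to pass to the limit $\varepsilon \to 0$, producing the desired Hermite--Einstein metric.

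The hard part is precisely the $C^0$ estimate on $s_\varepsilon$. I would argue by contradiction: if $m_\varepsilon := \sup_X |s_\varepsilon|$ is unbounded as $\varepsilon \to 0$, rescale $u_\varepsilon := s_\varepsilon/m_\varepsilon$ and, after establishing uniform $L^2_1$ bounds via a Bochner-type identity together with the evolution equation for $\tr(s_\varepsilon)$, extract a weak $L^2_1$ limit $u_\infty$ that is trace-free and non-zero. The crucial analytic input is then the Uhlenbeck--Yau regularity theorem: if $\chi_I$ denotes the characteristic function of a real interval $I$ with $\partial I$ avoiding the eigenvalues of $u_\infty$, then $\pi := \chi_I(u_\infty)$ is a weakly-holomorphic $L^2_1$ projection, hence by their removable-singularity argument it is induced by a coherent reflexive subsheaf $\cF \subset E$. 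A careful slope computation using the spectral decomposition of $u_\infty$, combined with the monotonicity provided by the perturbed equation, produces an interval $I$ for which $\mu_L(\cF) \geq \mu_L(E)$, contradicting stability. The Uhlenbeck--Yau regularity statement for weakly-holomorphic $L^2_1$ projections is the principal obstacle, and in my view the deepest step of the entire correspondence.
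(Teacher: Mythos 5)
The paper does not prove Theorem~\ref{thm:HKcorrespondence}: it is the classical Hitchin--Kobayashi correspondence, imported as a black box with citations to \cite{skobayashi82,lubke83,uhlenbeckyau86,donaldson87}, so there is no internal argument to compare yours against. Your sketch is a faithful outline of the standard proof as given in Uhlenbeck--Yau and in L\"ubke--Teleman: the Chern--Weil/second-fundamental-form inequality for weakly holomorphic $L^2_1$ projections in the easy direction, and the perturbed continuity path $\Lambda_\om(iF_h)-c\Id_E+\ep\log(h_0^{-1}h)=0$ with the rescaling/destabilising-subsheaf contradiction in the hard direction. Two caveats. First, your justification of solvability at $\ep=1$ is too quick: invertibility of the linearisation $\Delta_{A_0}+\Id$ at $s=0$ only gives solutions for data near the value at $s=0$; in the references the existence of $s_1$ requires its own argument (a separate continuity or sub/supersolution scheme), not merely the implicit function theorem at the origin. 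Second, the Uhlenbeck--Yau regularity theorem for weakly holomorphic projections is invoked, not proved, and you rightly identify it as the deepest step --- as a blind reconstruction of a cited classical theorem this is acceptable, but it means the proposal is an outline rather than a proof. Neither point affects the paper, which only uses the statement.
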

We will be mostly interested in semi-stable vector bundles. A \emph{Jordan--H\"older filtration} for a torsion-free sheaf $\cE$ is a filtration by coherent saturated subsheaves:
 \begin{align}
\label{eq:JHfiltration}
0 = \cF_0 \subset \cF_1 \subset \hdots \subset \cF_\ell = \cE,
\end{align}
such that the corresponding quotients,
\begin{align}
\label{eq:JHquots}
\cG_i = \frac{\cF_i}{\cF_{i-1}},
\end{align}
 for $i=1, \hdots, \ell$, are stable with slope $\mu_L(\cG_i)=\mu_L(\cE)$.
 In particular, the graded object of this filtration
 \begin{equation}
  \label{eq:JHgraded}
  \Gr(\cE):=\bigoplus_{i=1}^l \cG_i
 \end{equation}
is polystable. From \cite[Section 1]{HuLe}, we have the standard existence and uniqueness result:
\begin{proposition}
 \label{prop:JHfiltration}
 Any semi-stable coherent torsion-free sheaf $\cE$ on $(X,L)$ admits a Jordan--H\"older filtration, and the double dual of the graded object $\Gr(\cE)^{**}$ of such filtrations is unique up to isomorphism.
\end{proposition}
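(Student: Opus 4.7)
The plan is to mimic the classical Jordan--H\"older theorem in abelian categories, by first establishing that semi-stable torsion-free sheaves of a fixed slope $\mu\in\Q$ form an abelian subcategory $\mathcal{C}_\mu$ of coherent sheaves on $(X,L)$, in which the simple objects are precisely the stable sheaves of slope $\mu$. Once this categorical framework is in place, both the existence and the uniqueness assertions reduce to the standard Jordan--H\"older argument for finite-length objects in an abelian category.

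To set up $\mathcal{C}_\mu$, the key computation is that for any morphism $\varphi\colon\cF\to\cG$ between semi-stable torsion-free sheaves of slope $\mu$, the image $\Image\varphi$ again has slope $\mu$: semi-stability of $\cG$ gives $\mu_L(\Image\varphi)\leq\mu$, while the surjection $\cF\twoheadrightarrow\Image\varphi$ combined with semi-stability of $\cF$ (in the quotient formulation: every torsion-free quotient has slope at least $\mu$) yields the reverse inequality. It follows that kernels, images, and cokernels modulo torsion all remain in $\mathcal{C}_\mu$; in particular, stable sheaves of slope $\mu$ are exactly the simple objects of $\mathcal{C}_\mu$, since any proper non-trivial subobject would have strictly smaller slope.

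For existence of a Jordan--H\"older filtration on $\cE$, I would proceed by induction on $\rank(\cE)$. Among proper subsheaves of $\cE$ with slope $\mu$, pick one $\cF_1$ of minimal positive rank and replace it by its saturation in $\cE$; this does not change the rank, and it preserves the slope thanks to semi-stability of $\cE$ bounding the slope from above. Minimality then forces $\cF_1$ to be stable, while saturation makes $\cE/\cF_1$ torsion-free and semi-stable of slope $\mu$, so that the induction hypothesis supplies the remainder of the filtration.

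For uniqueness, this is now a formal consequence of the classical Jordan--H\"older theorem applied in the abelian category $\mathcal{C}_\mu$ to $\cE$, whose length there is bounded by its rank: any two filtrations admit a common refinement via the Zassenhaus/Schreier argument, and the resulting multisets of simple subquotients coincide, whence $\Gr(\cE)$ is well defined up to isomorphism. The main technical point, and what I expect to be the real obstacle to keeping the argument clean, is the care required in handling torsion and saturations throughout: slopes behave well under passage to saturation only because of the semi-stability hypothesis, and this must be threaded through each verification of the abelian-category axioms for $\mathcal{C}_\mu$.
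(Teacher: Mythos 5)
The paper gives no argument for this proposition: it is quoted verbatim from Huybrechts--Lehn, so the comparison is really with the standard textbook proof you are reconstructing. Your existence argument is correct and is that proof: induct on the rank, choose a slope-$\mu$ subsheaf of minimal positive rank, and saturate; saturation can only increase the degree while semi-stability of $\cE$ caps the slope at $\mu$, so the saturated subsheaf is stable of slope $\mu$ with torsion-free semi-stable quotient, and the induction closes.

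The uniqueness half, however, has a genuine gap, and it is not a removable technicality. For \emph{slope} stability the category $\mathcal{C}_\mu$ you describe is not abelian, even after discarding torsion in cokernels: on a surface the inclusion $\cI_p\hookrightarrow\cO_X$ is a nonzero morphism between stable torsion-free sheaves of slope $0$ which is a monomorphism and an epimorphism modulo torsion, yet not an isomorphism; for the same reason $\cO_X$ is stable but not simple in your category ($\cI_p$ is a proper nontrivial subobject of the same slope). The honest fix is to pass to the Serre quotient of coherent sheaves by those supported in codimension $\geq 2$; there the Jordan--H\"older/Schreier argument does apply, but it only yields uniqueness of $\Gr(\cE)$ up to isomorphism \emph{in codimension one} --- and that is in fact all that is true at this level of generality. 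Concretely, on $\P^2$ with distinct points $p\neq q$, let $\cE=\{(a,b)\in\cO^{\oplus 2}\ :\ a(p)=b(p),\ a(q)=-b(q)\}$; this is torsion-free and semi-stable of slope $0$, the subsheaves $\{(\lambda,\lambda):\lambda\in\cI_q\}\cong\cI_q$ and $\{(\lambda,0):\lambda\in\cI_{\{p,q\}}\}\cong\cI_{\{p,q\}}$ are saturated with quotients $\cI_p$ and $\cO$ respectively, and the resulting graded objects $\cI_p\oplus\cI_q$ and $\cI_{\{p,q\}}\oplus\cO$ are not isomorphic (the second has global sections, the first does not). So your plan can only prove the proposition with ``isomorphism'' weakened to ``isomorphism in codimension one''; this is also how Huybrechts--Lehn state the $\mu$-stability version, full uniqueness being reserved for Gieseker stability, where the category of semi-stable sheaves with fixed reduced Hilbert polynomial genuinely is abelian. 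For the purposes of this paper the weaker statement suffices, since in the sufficiently smooth case the graded pieces are locally free and reflexive sheaves isomorphic in codimension one are isomorphic.
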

When $E$ is locally-free and semi-stable, we say that it is {\it sufficiently smooth} if $\Gr(E)$ is locally-free. In that case, we denote $\mathfrak{E}_{[\omega]}$ the set of holomorphic subbundles of $E$ built out of successive extensions of some of the stable components of $\Gr(E)$. Equivalently, $\mathfrak{E}_{[\omega]}$ is the set of holomorphic subbundles of $E$ arising in a Jordan-Holder filtration for $E$. Finally, we recall that a necessary condition for $E$ to be stable is simplicity, that is $\Aut(E)=\C^*\cdot\Id_E$.

\section{Geometry of the blow-up}
\label{sec:blowup}
We consider now $Z \subset X$ a $m$-dimensional complex submanifold of codimension $r=n-m\geq 2$ and the blow-up map 
$$
\pi : \mathrm{Bl}_Z(X) \to X.
$$
We will denote by $X'=\mathrm{Bl}_Z(X)$ the blown-up manifold and by $Z'=\pi^{-1}(Z)$ the exceptional divisor.  We denote by 
$$
L_\ep:= \pi^* L - \ep [Z']
$$
a polarisation on $X'$, for $0< \ep \ll 1$. Let $E\to X$ be a holomorphic vector bundle, and denote by $E'=\pi^*E$ the pulled back bundle. For any holomorphic subbundle $F\subset E$, the intersection numbers $\mu_{L_\ep}(\pi^*E)-\mu_{L_\ep}(\pi^*F)$ admit expansions in $\ep$, with first term given by $\mu_L(E)-\mu_L(F)$. For that reason, given the Hitchin--Kobayashi correspondence in Theorem \ref{thm:HKcorrespondence}, semi-stability of $E$ on $(X,L)$ is a necessary condition for its pullback $E'$ to admit an HYM connection with respect to a K\"ahler metric in $L_\ep$, for all $0< \ep \ll 1$. Another necessary condition is simplicity of $E'$, which, by Hartogs' theorem, is equivalent to simplicity of $E$. Then, natural candidates to test for stability of $E'$ are given by the pullbacks of elements in $\mathfrak{E}_{[\omega]}$, and Condition (\ref{eq:necessarynumericalcondition}) clearly is necessary for $E'$ to be stable in the polarisations we consider, and thus to admit an HYM connection. Hence, we will assume $E$ to be simple, semi-stable, and to satisfy (\ref{eq:necessarynumericalcondition}). We now turn back to the differential geometry of the blow-up.

\subsection{Decomposition on spaces of sections}
\label{sec:decompositions}
We have a commutative diagramm:
$$
\begin{array}{ccc}
 Z' & \stackrel{\iota}{\longrightarrow} & X' \\
 \downarrow &  & \downarrow \\
 Z & \stackrel{\iota_0}{\longrightarrow} & X
\end{array}
$$
where $\iota_0$ and $\iota$ denote the inclusions, while the vertical arrows are given by the projection map $\pi$. We then have a pullback map on sections 
$$
\pi^* : \Gamma(X, \End(E)) \longrightarrow \Gamma(X',\End(\pi^*E))
$$
as well as a restriction map :
$$
\iota^* : \Gamma(X',\End(\pi^*E)) \longrightarrow \Gamma(Z',\End(\iota^*\pi^*E)).
$$
% and 
% $$
% \iota_0^* : \Gamma(X,\End(E)) \to \Gamma(Z,\End(\iota_0^*E)).
% $$
Our goal now is to fit those maps in a short exact sequence, that will in the end split the space $\Gamma(X',\End(\pi^*E))$.
  If $N_Z=TX_{\vert Z}/ TZ$ denotes the normal bundle of $Z$ in $X$, then $Z'\simeq \P(N_Z)$, and we can fix a $(1,1)$-form $\lambda \in c_1(\cO_{\P(N_Z)}(1))$ that restricts to K\"ahler metrics on the fibers of $\P(N_Z)\to Z$. We also fix a K\"ahler form $\om\in c_1(L)$ on $X$, and consider its restriction to $Z$. We then have a K\"ahler $\C\P^{r-1}$-fibration :
$$
\pi : (Z', \lambda) \longrightarrow (Z, \om).
$$
By averaging along fibers as described in \cite[Section 2.3]{SekTip}, we obtain a splitting
\begin{equation}
 \label{eq:firstsplitting}
\Gamma(Z',\End(\iota^*\pi^*E))=\pi^*(\Gamma(Z,\End(\iota_0^*E)))\oplus\Gamma_0(Z',\End(\iota^*\pi^*E)).
\end{equation}
 We will omit the $\iota^*$ and $\pi^*$ to simplify notation. Using the projection on the second factor 
 $$
 p_0: \Gamma(Z',\End(E))\to \Gamma_0(Z',\End(E))
 $$
 in (\ref{eq:firstsplitting}), we deduce a short exact sequence :
$$
0 \longrightarrow \Gamma(X, \End(E)) \stackrel{\pi^*}{\longrightarrow} \Gamma(X',\End(E)) \stackrel{p_0\circ \iota^*}{\longrightarrow} \Gamma_0(Z',\End(E)) \longrightarrow 0.
$$
We can actually split this sequence by mean of a linear extension operator
$$
\iota_* : \Gamma_0(Z',\End(E)) \longrightarrow \Gamma(X',\End(E))
$$
such that 
$$
p_0\circ \iota^*\circ \iota_* = \mathrm{Id}.
$$
This can be done using bump functions and a standard partition of unity argument. The outcome is an isomorphism :
\begin{equation}
 \label{eq:goodcoordinates}
\begin{array}{ccc}
\Gamma(X',\End(E))  & \longrightarrow &  \Gamma(X, \End(E)) \oplus \Gamma_0(Z',\End(E)) \\
   s & \longmapsto & (s-\iota_*\circ p_0\circ \iota^* s \;,\;  p_0\circ \iota^* s),
\end{array}
\end{equation}
with inverse map $(s_X, s_Z) \mapsto (\pi^*s_X + \iota_* s_Z)$. This splits the Lie algebra of gauge transformations, and will be used to identify contributions coming from $X$ and from $Z'$ in the $\ep$-expansion of the linearisation, which we describe in the next section. From now on, by abuse of notations, we will consider the spaces $\Gamma(X, \End(E))$ and $\Gamma_0(Z',\End(E))$ as subspaces of $\Gamma(X',\End(\pi^*E))$, and denote $s=s_X+s_Z$ the decomposition of an element $s\in \Gamma(X',\End(E))$.

\subsection{Decomposition of the Laplace operator}
\label{sec:decomposeLaplace}
 We extend $\lambda$ to a closed $(1,1)$-form over $X'$ as in \cite[Section 3.3]{voisin1} and consider the family of K\"ahler metrics on $X^\prime$:
 $$\omega_\varepsilon=\pi^*\omega+\varepsilon\lambda\in c_1(L_\ep),\: 0 <\varepsilon\ll 1.$$
 Let $A$ be a Hermitian connection on $E$,
  which we pull back to $X^\prime$ and extend to the bundle $\End(\pi^*E)$. We will now study the Laplace operator 
\begin{eqnarray*}
\Delta_\varepsilon s =i\Lambda_\varepsilon (\bar\partial_{A}\partial_{A}-\partial_{A}\bar\partial_{A})s
\end{eqnarray*}
acting on the various components of $s=s_X+s_Z \in\Gamma(X^\prime,\End(E))$, where $\Lambda_\varepsilon$ is the Lefschetz operator for the metric $\omega_\varepsilon$. For this, we need to introduce an operator on $Z'$.
\begin{definition}
  The \textit{vertical Laplace operator}, denoted 
$$
\Delta_{\scV} : \Gamma_0 \left( Z', \End(E) \right) \to \Gamma_0 \left( Z',\End(E)\right),
$$ 
is the operator 
$$
\Delta_{\scV} = i \Lambda_{\cV} \big(  \partial_{F} \bar \partial_{F}  - \bar \partial_{F}  \partial_{F} \big),
$$ 
where 
$$
 \Lambda_{\cV} : \Om^{1,1}\left( Z', \End(E) \right) \to \Gamma_0 \left( Z',\End(E)\right)
$$
is the {\it vertical contraction operator} defined on each fiber $F_z=\pi^{-1}(z)$ of $\pi : Z' \to Z$  by
$$(\Lambda_{\cV} \alpha)_{\vert F_z}\, \lambda_{\vert F_z}^{r-1}= (r-1)\alpha_{\vert F_z}\wedge\lambda_{\vert F_z}^{r-2} $$
and where
$d_F=\partial_{F} +\bar \partial_{F} $ is the flat connection along the fibres of $\pi : Z' \to Z$. 
\end{definition}

% 
% The \textit{vertical Laplace operator}, denoted 
% $$
% \Delta_{\scV} : \Gamma_0 \left( Z', \End(E) \right) \to \Gamma_0 \left( Z',\End(E)\right),
% $$ 
% is the operator defined by the following procedure. Let $\sigma\in\Gamma_0(Z',\End(E))$. Over a point $x \in Z$, take the restriction $\sigma_x$ of $\sigma$ to $Z'_x=\pi^{-1}(x)$, and consider $\sigma_x$ as a map to $\mathbb{C}^p$ with components $\sigma_x^i$ in a trivialisation $\pi^*\End(E)_x \cong \mathbb{C}^p$ of the restriction of $\pi^* \End(E)$ to the fibre $Z'_x$ of $Z' \to Z$. Define 
% $$\left( \Delta_{\scV} \left( \sigma\right) \right)^i_x = \Delta_{ (\lambda)_{|Z'_x} } \left( \sigma_x^i \right),$$
% for $\Delta_\lambda$ the Laplacian of the K\"ahler form $\lambda$ on $Z'_x$. Then glue together to form a section of $\pi^*\End(E)$. As in \cite[Section 4.1]{SekTip}, one easily obtains that this construction is independent on the trivialisation chosen, and sends smooth sections to smooth sections. 
% We can define Sobolev norms on spaces of sections using the metrics $\om,\lambda$ and any fixed metric $h$ on $E$, together with the covariant derivatives given by $A$. Then, the vertical Laplacian defines a family of invertible operators $(\Delta_{\cV,x})_{x\in Z}$. As $Z$ is compact, Sobolev estimates on this family of elliptic operators can be made uniform in $x\in Z$. This yields the following Lemma, where the supscript $l$ (or $l+2$) stands for the Banach completion with respect to some $L^{2,l}$ Sobolev norm.  
The following lemma relies on \cite{kodairaspencer60} and was already observed in \cite[Lemma 4.6]{SekTip}. 
\begin{lemma}
 \label{lem:verticallaplacianinvertible}
 The vertical Laplacian
 $$
\Delta_{\scV} : \Gamma_0 \left( Z', \End(E) \right) \to \Gamma_0 \left( Z',\End(E)\right)
$$ 
is an invertible linear map.
\end{lemma}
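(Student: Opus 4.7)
The proof will proceed by establishing fiberwise invertibility together with Sobolev control of the glued inverse.

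First I would observe that the fibration $\pi \colon Z' \to Z$ has fibers $Z'_x \cong \mathbb{CP}^{r-1}$ which are contracted by $\pi$ to a point, so the restriction of $\pi^*\End(E)$ to any fiber $Z'_x$ is canonically trivial. Under such a trivialisation, the vertical Laplacian $\Delta_{\scV}$ acts on the fiber as the ordinary Hodge Laplacian $\Delta_\lambda$ on $(\mathbb{CP}^{r-1}, \lambda_{|Z'_x})$ applied componentwise to a $\mathbb{C}^p$-valued function. In particular, $\Delta_{\scV}$ is a self-adjoint, fiberwise elliptic operator on $Z'$.

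Next, I would identify the fiberwise kernel and cokernel. Since the kernel of the scalar Laplacian on a compact K\"ahler manifold consists of constants, the fiberwise kernel of $\Delta_{\scV}$ on a fiber $Z'_x$ consists precisely of constant $\C^p$-valued functions; globalising, the kernel of $\Delta_{\scV}$ on $\Gamma(Z', \End(E))$ is exactly $\pi^*\Gamma(Z, \End(E))$. By the averaging-along-fibers construction recalled in the splitting (\ref{eq:firstsplitting}), the space $\Gamma_0(Z', \End(E))$ is precisely the $L^2$-orthogonal complement of $\pi^*\Gamma(Z,\End(E))$, i.e. the set of sections whose fiberwise average vanishes. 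Consequently $\ker(\Delta_{\scV}) \cap \Gamma_0 = 0$, and by self-adjointness with respect to a suitable $L^2$ pairing, $\Delta_{\scV}$ also preserves $\Gamma_0$ (the average of $\Delta_{\scV}\sigma$ on each fiber vanishes because $\Delta_\lambda$ commutes with integration against the volume form on $\mathbb{CP}^{r-1}$).

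To construct the inverse, I would define $G_{\scV}$ fiberwise by applying, over each $x \in Z$, the Green's operator of $\Delta_\lambda$ on $(Z'_x, \lambda_{|Z'_x})$ acting on $\C^p$-valued functions with zero mean. Since all fibers are Fubini-Study $\mathbb{CP}^{r-1}$'s (scaled by the hermitian metric on $N_Z$ at $x$), the resulting Green's kernel depends smoothly on $x\in Z$, so $G_{\scV}$ produces a smooth section from a smooth section and is an inverse of $\Delta_{\scV}$ on $\Gamma_0$.

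The main obstacle, and the real content of the lemma, is showing that $G_{\scV}$ is bounded from $\Gamma_0^l$ to $\Gamma_0^{l+2}$. Fiberwise elliptic regularity for $\Delta_\lambda$ on $\mathbb{CP}^{r-1}$ gives the gain of two vertical derivatives with a constant uniform in $x\in Z$. To recover horizontal derivatives, I would proceed by induction on $l$ using commutator estimates: if $\nabla^H$ denotes a horizontal covariant derivative built from $A$ and the Chern connection on $N_Z$, then
\begin{equation*}
\Delta_{\scV} (\nabla^H G_{\scV} \sigma) = \nabla^H \sigma + [\Delta_{\scV}, \nabla^H] G_{\scV}\sigma,
\end{equation*}
where $[\Delta_{\scV}, \nabla^H]$ is a differential operator of order at most two whose leading part involves only one vertical derivative (it comes from the variation of the fiber metric and the connection along $Z$). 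Applying $G_{\scV}$, the vertical regularity estimate, and the induction hypothesis yields the desired $L^{2,l+2}$-bound. Combined with the trivial-kernel statement, this proves bijectivity from $\Gamma_0^{l+2}$ to $\Gamma_0^l$.
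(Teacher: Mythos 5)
The paper does not actually prove this lemma---it is quoted verbatim from \cite[Section 4.1]{SekTip}---so there is no internal argument to compare against. Your overall strategy (fiberwise Green's operator on each $\C\P^{r-1}$ fibre, uniform spectral gap by compactness of $Z$, smooth dependence of the Green's kernel on the base point) is certainly the intended one, and the first three paragraphs are correct: the restriction of $\pi^*\End(E)$ to a fibre is trivial, the fibrewise kernel consists of the pullbacks $\pi^*\Gamma(Z,\End(E))$, the space $\Gamma_0$ is its fibrewise zero-average complement, $\Delta_{\scV}$ preserves $\Gamma_0$, and the fibrewise Green's operator inverts it on smooth sections.

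The gap is precisely at the step you yourself identify as ``the real content'': the bound $G_{\scV}\colon\Gamma_0^{l}\to\Gamma_0^{l+2}$. Two problems. First, a local computation in coordinates $(x,w)$ adapted to the fibration gives $[\Delta_{\scV},\nabla^H]=-(\nabla^H a^{i\bar j})\,\partial_{w_i}\partial_{\bar w_j}+\cdots$, a \emph{second}-order vertical operator, not one ``involving only one vertical derivative''. Second, and more fundamentally, the induction does not close: the identity $\Delta_{\scV}(\nabla^H G_{\scV}\sigma)=\nabla^H\sigma+[\Delta_{\scV},\nabla^H]G_{\scV}\sigma$ trades one horizontal derivative of $G_{\scV}\sigma$ for one horizontal derivative of $\sigma$, so it can never yield more horizontal regularity for $G_{\scV}\sigma$ than $\sigma$ already has; the base case $G_{\scV}\colon\Gamma_0^{0}\to\Gamma_0^{2}$ already demands two horizontal derivatives out of none. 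Indeed no purely fibrewise operator can gain horizontal regularity: in a local product model take $\sigma=f(x)g(w)$ with $g$ a fixed zero-mean fibrewise eigenfunction of eigenvalue $\mu>0$ and $f\in L^{2,l}(Z)\setminus L^{2,l+1}(Z)$; then $\sigma$ lies in $\Gamma_0^{l}$ while $G_{\scV}\sigma=\mu^{-1}\sigma$ does not even lie in $\Gamma_0^{l+1}$. What your fibrewise construction genuinely delivers is $L^2$-invertibility with a uniform spectral gap together with a gain of two \emph{vertical} derivatives (and preservation, but not improvement, of horizontal regularity); the isotropic two-derivative gain asserted in your last paragraph is not available, so either the Sobolev spaces must be understood anisotropically or one must fall back on the ellipticity of the full operator $\Delta_\ep$, of which $\Delta_{\scV}$ is only the leading vertical block, at the point where the lemma is applied in Section \ref{sec:perturbation}.
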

% For the sake of completeness, we recall the proof.
% \begin{proof}
%  By restriction to each fiber $F_z=\pi^{-1}(z)\subset Z'$, $\Delta_\scV$ defines a differentiable family of self-adjoint elliptic operators $$(\Delta_{\scV,z} :\Gamma_0(F_z,\End(E_z)) \to \Gamma_0(F_z,\End( E_z)))_{z\in Z}$$ over $Z$ in the sense of Kodaira and Spencer \cite{kodairaspencer60}. Denote by $G_z$ the Green operator of $\Delta_{\scV,z}$ (as in \cite[page 48]{kodairaspencer60}).  By construction, $\Delta_{\scV,z}$ is the Laplace operator on $F_z$ acting on sections of the trivial bundle $\pi^*E_z$. Restricted to average zero sections in $\Gamma_0(F_z,\End(E_z))$, the kernel of each $\Delta_{\scV,z}$ is trivial, and thus by \cite[Theorem 5]{kodairaspencer60}, the associated family of Green operators $(G_z)_{z\in Z}$ is a differentiable family. In particular, for each $\sigma \in \Gamma_0(Z',\End(E))$ the solutions $s_z=G_z(\sigma_{\vert F_z})$ to $\Delta_{\scV,z} s_z = \sigma_{\vert F_z}$ are differentiable in $z$, meaning that there is a smooth section $s\in \Gamma_0(Z',\End(E))$ such that $s_{\vert F_z} =s_z$ satisfying $\Delta_\scV s = \sigma$. This shows surjectivity of $\Delta_\scV$, while injectivity follows from injectivity of each $\Delta_{\scV,z}$, $z\in Z$.
% \end{proof}
 In the following statements, an expression of the form $O(\ep^{j})$ is to be understood as holding pointwise. Convergence considerations of those expressions with respect to various Sobolev space norms will be addressed in Section \ref{sec:perturbation}.
\begin{lemma}
\label{lem:exp1}
If $s_Z=\iota_*\sigma_Z$ for $\sigma_Z\in\Gamma(Z^\prime,\End(E))$, then
\begin{eqnarray*}
(p_0\circ \iota^*)\Delta_\varepsilon(\iota_*\sigma_Z)=\varepsilon^{-1}\Delta_\mathcal{V}\sigma_Z +\mathcal{O}(1).
\end{eqnarray*}
\end{lemma}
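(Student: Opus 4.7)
The natural approach is to extract the leading singularity of $\Lambda_\ep$ from the block structure of $\om_\ep$ adapted to the exceptional divisor. Around a point of $Z'$, I would fix local holomorphic coordinates $(w,u,y)$ on $X'$ in which $Z'=\{u=0\}$, with the $w_j$ pulled back from coordinates on $Z$ and the $y_j$ parametrising the $\C\P^{r-1}$ fibre of $\pi\colon Z'\to Z$. Since $d\pi$ annihilates the vertical vectors $\partial_{y_j}$ along $Z'$, the form $\pi^{*}\om$ vanishes whenever contracted with a vertical direction at a point of $Z'$, while $\lambda$ restricts to a K\"ahler form on the fibres by construction. Consequently, in the horizontal/vertical block decomposition of the metric, $\om_\ep\vert_{Z'}$ has a horizontal--horizontal block of order one coming from $\pi^{*}\om$, a vertical--vertical block equal to $\ep\,g_\lambda$, and cross blocks of order $\ep$, all of the latter arising from the $\ep\lambda$ term.

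Inverting this block matrix via the Schur complement formula, the fact that the cross block is already $\mathcal{O}(\ep)$ means that the correction $B\,D^{-1}B^{T}$ is of order $\ep^2\cdot\ep^{-1}=\ep$, so it does not spoil the horizontal block. One obtains
\[
(\om_\ep)^{-1}\big\vert_{Z'}=\begin{pmatrix}g_0^{-1}+\mathcal{O}(\ep) & \mathcal{O}(1)\\ \mathcal{O}(1) & \ep^{-1}g_\lambda^{-1}+\mathcal{O}(1)\end{pmatrix}.
\]
Contracting this with any $\End(E)$-valued $(1,1)$-form $\alpha$ on $X'$ then produces the asymptotic
\[
\Lambda_\ep\alpha\,\big\vert_{Z'}=\ep^{-1}\Lambda_\lambda^{V}(\alpha_{VV})+\mathcal{O}(1),
\]
where $\Lambda_\lambda^{V}$ is the vertical Lefschetz operator built from $\lambda\vert_{Z'_x}$ and $\alpha_{VV}$ denotes the purely vertical--vertical component of $\alpha$.

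I would then apply this expansion to $\alpha=i(\bar\partial_{A}\partial_{A}-\partial_{A}\bar\partial_{A})(\iota_{*}\sigma_Z)$. Along each fibre $Z'_x$ of $Z'\to Z$, the bundle $\pi^{*}\End(E)$ is canonically trivialised by $\End(E_x)$ and the restriction of the connection $\pi^{*}A$ is the trivial connection. Hence in this fibre trivialisation the vertical--vertical part of $\alpha$ at a point of $Z'$ is given component-by-component by $i(\bar\partial\partial-\partial\bar\partial)(\sigma_Z^{i})$ evaluated on vertical directions only, and contracting with $\Lambda_\lambda^{V}$ yields precisely $\Delta_{\lambda\vert_{Z'_x}}(\sigma_Z^{i})$, which by the very definition of the vertical Laplacian assembles into $\Delta_{\scV}\sigma_Z$. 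Since $\Delta_{\scV}$ preserves $\Gamma_0(Z',\End(E))$ by Lemma~\ref{lem:verticallaplacianinvertible}, the projection $p_0$ acts as the identity on this leading term, and on the remainder it is merely a bounded operator, giving the desired $\mathcal{O}(1)$ contribution in the $L^{2,l}$ sense.

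The delicate point in the argument is really the block inversion above: what makes it work is that $\pi^{*}\om$ carries no vertical cross terms on $Z'$, so that all off-diagonal contributions to $\om_\ep$ come from $\ep\lambda$ and no spurious $\ep^{-1}$ singularity leaks into the horizontal or mixed blocks of $(\om_\ep)^{-1}$. Once this block picture is in place, patching the local computation with a partition of unity and turning it into uniform $L^{2,l}$-Sobolev bounds on the $\mathcal{O}(1)$ remainder follows the strategy of \cite[Section 4.1]{SekTip}.
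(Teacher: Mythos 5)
Your argument is correct and rests on exactly the same two geometric facts as the paper's proof: that $\pi^*\om$ annihilates the fibre directions of $Z'\to Z$ (so the only $\ep^{-1}$ singularity of $\Lambda_\ep$ sits in the vertical--vertical block) and that the pulled-back connection is trivial along the fibres (so the vertical part of $i(\bar\partial_A\partial_A-\partial_A\bar\partial_A)\iota_*\sigma_Z$ is the fibrewise $i\partial\bar\partial$ of the coefficient functions, whose $\lambda$-trace is by definition $\Delta_{\scV}\sigma_Z$). The only difference is presentational: you extract the asymptotics of $\Lambda_\ep$ by a Schur-complement inversion of the metric in horizontal/vertical blocks, whereas the paper expands $n\,Ds_Z\wedge\om_\ep^{n-1}/\om_\ep^n$ using $\om^{m+2}=0$ along $Z'$; these are equivalent computations and both yield the stated expansion.
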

\begin{proof}
We introduce the operator $D$ given by 
\begin{eqnarray*}
Ds_Z=i(\bar\partial_{A}\partial_{A}-\partial_{A}\bar\partial_{A})s_Z.
\end{eqnarray*}
The Laplacian $\Delta_\varepsilon$ satisfies on $X^\prime$ :
$$\Delta_\varepsilon s_Z\,\omega_\varepsilon^n=n Ds_Z\wedge \omega_\varepsilon^{n-1},$$
or equivalently
\begin{eqnarray*}
\Delta_\varepsilon s_Z = \frac{n\ Ds_Z\wedge (\omega+\varepsilon\lambda)^{n-1}}{(\omega+\varepsilon\lambda)^n}.
\end{eqnarray*}
We note that $\omega$ is a K\"ahler form on $X$, but on $X^\prime$ is degenerate along the fibre directions of the submanifold $Z'$. Then $(i^*\omega)^{m+1}=0\in\Omega^{2(m+1)}(Z^\prime)$, and at $x\in Z^\prime \subseteq X^\prime$, $\omega^{m+2}=0$. 
Then, expanding $(\omega+\varepsilon\lambda)^{n-1}$ and $(\omega+\varepsilon\lambda)^n$ gives
\begin{eqnarray*} 
\iota^*\Delta_\varepsilon s_Z
= (n-m-1)\varepsilon^{-1} \frac{Ds_Z\wedge \omega^{m+1}\wedge\lambda^{n-m-2}}{\omega^{m+1}\wedge\lambda^{n-m-1}} +\mathcal{O}(1).
\end{eqnarray*}
Restricting to $Z^\prime$, the connection $1$-forms of $A$ vanish, so $\iota^*Ds_Z=i\partial\bar\partial \sigma_Z$, acting on the coefficient functions of $\sigma_Z$. On the other hand, by considering a convenient orthonormal frame at $x\in Z^\prime$, we see that $\iota^*\Delta_\varepsilon\iota_*\sigma_Z=
\varepsilon^{-1}\Delta_\mathcal{V}\sigma_Z+\mathcal{O}(1)$. 
\end{proof}
In the next lemma, we denote $\Delta_\ep s_Z=(\Delta_\varepsilon s_Z)_X+(\Delta_\varepsilon s_Z)_Z$ the decomposition according to (\ref{eq:goodcoordinates}).
\begin{lemma}
\label{lem:exp2}
For $s_Z=\iota_*\sigma_Z$ with $\sigma_Z\in\Gamma(Z^\prime,\End(E))$, we have
\begin{eqnarray*}
(\Delta_\varepsilon s_Z)_X=\mathcal{O}(1).
\end{eqnarray*}
\end{lemma}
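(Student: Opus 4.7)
Starting from the decomposition (\ref{eq:goodcoordinates}), we have the pointwise identity
\begin{equation*}
(\Delta_\varepsilon s_Z)_X \;=\; \Delta_\varepsilon s_Z \;-\; \iota_* \circ p_0 \circ \iota^* (\Delta_\varepsilon s_Z).
\end{equation*}
The plan is to combine this with the previous lemma to reduce the claim to a local statement near $Z'$, and then to extend the pointwise-at-$Z'$ computation performed there to a neighborhood.

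First I would apply the previous lemma and the continuity of the extension $\iota_*$ between Sobolev spaces to rewrite the second summand as
\begin{equation*}
\iota_* \circ p_0 \circ \iota^*(\Delta_\varepsilon s_Z) \;=\; \varepsilon^{-1} \iota_* \Delta_{\mathcal V} \sigma_Z \;+\; \mathcal{O}(1).
\end{equation*}
This reduces the lemma to showing that $\Delta_\varepsilon s_Z - \varepsilon^{-1} \iota_* \Delta_{\mathcal V} \sigma_Z$ is $\mathcal{O}(1)$ on $X'$. Since $s_Z = \iota_* \sigma_Z$ is supported in a fixed tubular neighborhood $U$ of $Z'$ by the bump-function construction, and $\iota_* \Delta_{\mathcal V} \sigma_Z$ has the same support, both terms vanish outside $U$, so the analysis is localised to $U$.

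Inside $U$, the idea is to extend the expansion of $\omega_\varepsilon^{n-1}/\omega_\varepsilon^n$ carried out in the proof of the previous lemma from $Z'$ itself to the tubular neighborhood. In adapted coordinates $(x,y,t)$ near $Z'$, where $x$ parametrises $Z$, $y$ the $\mathbb{C}\mathbb{P}^{r-1}$ fibres of $Z' \to Z$, and $t$ is transverse to $Z'$, the section $\iota_* \sigma_Z$ is (up to the cutoff) the extension of $\sigma_Z$ constant in $t$. The only source of $\varepsilon^{-1}$ singular behaviour in $\Delta_\varepsilon$ comes from the fibre $y$-directions, where $\pi^*\omega$ is degenerate and the positivity of $\omega_\varepsilon$ is supplied by $\varepsilon\lambda$. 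Extracting this leading contribution and using that $\partial_t$-derivatives of $\iota_* \sigma_Z$ vanish on $Z'$, the singular part of $\Delta_\varepsilon s_Z$ is precisely $\varepsilon^{-1} \iota_* \Delta_{\mathcal V}\sigma_Z$, while the horizontal Laplacian, the connection-induced corrections (which vanish at $Z'$), the derivatives of the cutoff, and the sub-leading terms in the $\varepsilon$-expansion of $\Lambda_\varepsilon$ all contribute sections of bounded $L^{2,l}$ Sobolev norm in terms of $\|\sigma_Z\|_{L^{2,l+2}}$.

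The main obstacle is exactly this uniform local analysis: the previous lemma only extracted the singular term pointwise on $Z'$, whereas here the cancellation must hold in a uniform Sobolev sense across the tubular neighborhood. The delicate part is controlling the remainder $\omega_\varepsilon^{n-1}/\omega_\varepsilon^n - \varepsilon^{-1}\,\omega^{m+1}\wedge \lambda^{n-m-2}/(\omega^{m+1}\wedge\lambda^{n-m-1})$ away from $Z'$ and checking that, when paired with $\partial\bar\partial s_Z$, it produces an $L^{2,l}$-controlled section rather than a distribution with worse $\varepsilon$-dependence. Together with the bound on the derivatives of the cutoff, this will deliver the required $\mathcal{O}(1)$ bound and conclude the proof.
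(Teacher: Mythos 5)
Your opening identity and the substitution of the previous lemma into the second summand are both legitimate, but they convert the statement into the claim that $\Delta_\varepsilon s_Z - \varepsilon^{-1}\iota_*\Delta_{\mathcal V}\sigma_Z$ is $\mathcal{O}(1)$ on a whole tubular neighbourhood of $Z'$, and the local analysis you then sketch cannot deliver this: the cancellation between the two terms takes place only \emph{on} $Z'$, not off it. At a point at distance $d$ from $Z'$ the fibre directions of $\omega_\varepsilon$ have size $d^2+\varepsilon$ (in the point blow-up model $\pi^*\omega$ contributes $|u|^2$ to the $dv\wedge d\bar v$ coefficient), so the fibre contribution to $\Delta_\varepsilon s_Z$ carries the factor $(d^2+\varepsilon)^{-1}$, which is bounded for fixed $d>0$; by contrast $\varepsilon^{-1}\iota_*\Delta_{\mathcal V}\sigma_Z$ is $\varepsilon^{-1}\chi(d)\,\Delta_{\mathcal V}\sigma_Z$ and stays of size $\varepsilon^{-1}$ on the whole support of the cut-off. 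The ``remainder'' you yourself single out as the delicate point, $\omega_\varepsilon^{n-1}/\omega_\varepsilon^{n}-\varepsilon^{-1}\,\omega^{m+1}\wedge\lambda^{n-m-2}/(\omega^{m+1}\wedge\lambda^{n-m-1})$, is therefore genuinely of order $\varepsilon^{-1}$ at fixed positive distance from $Z'$: there is no uniform expansion of $\Lambda_\varepsilon$ with leading term $\varepsilon^{-1}\Delta_{\mathcal V}$ across the transition region $d\sim\sqrt{\varepsilon}$, where the singular behaviour switches from $\varepsilon^{-1}$-type to $\mathcal{O}(1)$-type. So the step you defer to the end is not a technical verification; it is exactly where the argument breaks, and it cannot be repaired by bounding cut-off derivatives or connection terms.

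The paper's proof is organised precisely so as never to form this difference of two separately singular quantities. It uses that $(\Delta_\varepsilon s_Z)_X$ is by construction a pullback $\pi^*\phi$, hence determined as the continuous extension across $Z$ of $\pi_*\bigl(\mathrm{Id}-\iota_*(p_0\circ\iota^*)\bigr)\Lambda_\varepsilon Ds_Z$, and it estimates $\Lambda_\varepsilon Ds_Z$ on $X'\setminus Z'$, where $\omega$ is a nondegenerate K\"ahler form and $n\,Ds_Z\wedge(\omega+\varepsilon\lambda)^{n-1}/(\omega+\varepsilon\lambda)^{n}$ has a regular expansion in $\varepsilon$ with no $\varepsilon^{-1}$ term; boundedness of the operator $\pi_*\bigl(\mathrm{Id}-\iota_*(p_0\circ\iota^*)\bigr)$ then yields the conclusion. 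If you wish to keep your decomposition, you must treat $\iota_*p_0\iota^*\Delta_\varepsilon s_Z$ and $\Delta_\varepsilon s_Z$ as a single combination whose $X$-component is read off away from $Z'$, rather than replacing the first by its leading term and attempting a neighbourhood-wide cancellation; as written, your reduction isolates the two singular contributions on different regions and the claimed $\mathcal{O}(1)$ bound does not follow.
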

\begin{proof}
 By definition, $(\Delta_\varepsilon s_Z)_X=\pi^*\phi$ for some $\phi\in\Gamma(X,\End(E))$. As we also have
\begin{eqnarray*}
 (\Delta_\varepsilon s_Z)_X &=& (\mathrm{Id}-\iota_*(p_0\circ\iota^*))\Lambda_\varepsilon D s_Z,
\end{eqnarray*}
we deduce that the section $\phi$ is the continuous extension of $\pi_*(\mathrm{Id}-\iota_*(p_0\circ\iota^*))\Lambda_\varepsilon D s_Z$ across $Z\subseteq X$. On $X^\prime\setminus Z^\prime$ we have
 \begin{eqnarray*}
\Lambda_\varepsilon Ds_Z &=& n\frac{Ds_Z\wedge (\omega^{n-1}+\mathcal{O}(\varepsilon))}{\omega^n+\mathcal{O}(\varepsilon)}=\cO(1).
\end{eqnarray*}
  As $\pi_*(\mathrm{Id}-\iota_*(p_0\circ\iota^*))$ is $\cO(1)$, the result follows.
\end{proof}

From the previous two lemmas, in the decomposition $$s=s_X+s_Z,$$
$\Delta_\varepsilon s_Z$ also lies in the subspace $\Gamma_0(Z^\prime,\End(E))\subseteq\Gamma(X^\prime,\End(E))$ to higher order in $\ep$. For $s_X\in\Gamma(X,\End(E))$, 
$$\Delta_\varepsilon s_X=(\Delta_\varepsilon s_X)_X+(\Delta_\varepsilon s_X)_Z$$
where $(\Delta_\varepsilon s_X)_Z=\iota_*(p_0\circ\iota^*)\Delta_\varepsilon s_X$. We first consider $\iota^*\Delta_\varepsilon s_X$.

\begin{lemma}
\label{lem:exp3}
For $s_X=\pi^*\sigma_X\in\Gamma(X,\End(E))\subseteq\Gamma(X^\prime,\End(E))$, 
\begin{eqnarray*}
\iota^*\Delta_\varepsilon s_X=(m+1)\frac{Ds_X\wedge\omega^m\wedge\lambda^{n-m-1}}{\omega^{m+1}\wedge\lambda^{n-m-1}}+\mathcal{O}(\varepsilon).
\end{eqnarray*}
\end{lemma}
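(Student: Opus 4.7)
The plan is to reuse the pointwise identity
\[
\Delta_\ep s_X = \frac{n\,Ds_X\wedge(\om+\ep\lambda)^{n-1}}{(\om+\ep\lambda)^n}
\]
which was established in the proof of the previous lemma, and to expand both numerator and denominator at points of $Z'$. The key geometric observation is that $\om=\pi^*\om_X$ and, since $s_X=\pi^*\sigma_X$ and $A$ is pulled back from $X$, also $Ds_X=\pi^*(D\sigma_X)$ are pulled back from $X$. At any $p\in Z'$ the kernel of $d\pi_p$ has complex dimension $r-1$ (the tangent space to the fibre of $Z'=\P(N_Z)\to Z$), so a pullback of a $(j,j)$-form from $X$ must vanish at $p$ as soon as $j\geq m+2$. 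This yields the two vanishings
\[
\om^j|_p=0 \quad (j\geq m+2), \qquad Ds_X\wedge\om^j|_p=0 \quad (j\geq m+1),
\]
which control the orders of vanishing in the binomial expansion.

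Next I would expand in $\ep$. At $p\in Z'$, both
\[
(\om+\ep\lambda)^n|_p=\sum_{k=0}^n\binom{n}{k}\om^{n-k}\wedge\lambda^k\,\ep^k
\]
and
\[
Ds_X\wedge(\om+\ep\lambda)^{n-1}|_p=\sum_{k=0}^{n-1}\binom{n-1}{k}Ds_X\wedge\om^{n-1-k}\wedge\lambda^k\,\ep^k
\]
have their first non-vanishing contribution at order $\ep^{r-1}$, equal respectively to $\binom{n}{r-1}\om^{m+1}\wedge\lambda^{r-1}\ep^{r-1}$ and $\binom{n-1}{r-1}Ds_X\wedge\om^m\wedge\lambda^{r-1}\ep^{r-1}$. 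The two factors of $\ep^{r-1}$ cancel in the ratio, and the remaining combinatorial constant simplifies to
\[
\frac{n\binom{n-1}{r-1}}{\binom{n}{r-1}}=n-r+1=m+1.
\]
The next-order terms in both expansions appear at order $\ep^r$, so the error in the ratio after cancellation is $\cO(\ep)$. Writing $r-1=n-m-1$ then gives the claimed leading term.

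There is no real obstacle here: the argument is essentially the same binomial expansion as in the previous lemma, pushed one order further in $\ep$. The only subtlety worth flagging is that $\iota^*\Delta_\ep s_X$ is a section of $\End(\pi^*E)|_{Z'}$, so applying $\iota^*$ amounts to pointwise restriction, and the ratio of top forms on $X'$ can legitimately be evaluated at any single point of $Z'$, so the vanishing statements above translate directly into the claimed asymptotic.
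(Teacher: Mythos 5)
Your proof is correct and follows essentially the same route as the paper: both rest on the observations that $Ds_X$ is basic for $\pi$ (so $Ds_X\wedge\om^{m+1}$ vanishes along $Z'$, while $\om^{m+1}\wedge\lambda^{n-m-1}$ is a volume form near $Z'$), followed by the binomial expansion of $(\om+\ep\lambda)^{n-1}$ and $(\om+\ep\lambda)^n$ as in the preceding lemma. Your explicit verification of the combinatorial constant $n\binom{n-1}{r-1}/\binom{n}{r-1}=n-r+1=m+1$ is a correct filling-in of the detail the paper leaves to the reader.
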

\begin{proof}
Firstly, $s_X=\pi^*\sigma_X$, and the connection $A$ is pulled back from $X$, so $Ds_X$ is basic for the projection to $X$ and $Ds\wedge \omega^{m+1}=0$ at points in $Z^\prime$. Secondly, we note that $\omega^{m+1}\wedge\lambda^{n-m-1}$ is a volume form on $X^\prime$, in a neighbourhood of $Z^\prime$. Then, the result follows similarly to the previous lemma.
\end{proof}
For the final term $(\Delta_\varepsilon s_X)_X$, we introduce $\Delta_X$ the Laplace operator of $A$ on $\End (E) \to (X,\om)$:
$$
\begin{array}{cccc}
\Delta_{X} : &  \Gamma \left( X, \End(E) \right) &\to& \Gamma \left(X,\End(E)\right)\\
            & \sigma & \mapsto & i\Lambda_\om (\bar\partial_{A}\partial_{A}-\partial_{A}\bar\partial_{A})\sigma.
\end{array}
$$
\begin{lemma}
\label{lem:exp4}
For $s_X=\pi^*\sigma_X\in\Gamma(X,\End(E))\subseteq\Gamma(X^\prime,\End(E))$, 
\begin{eqnarray*}
(\Delta_\varepsilon s_X)_X =\pi^*(\Delta_{X}\sigma_X)+\mathcal{O}(\varepsilon).
\end{eqnarray*}
\end{lemma}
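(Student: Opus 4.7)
My plan is to split the analysis of $\Delta_\varepsilon s_X$ into the interior of $X'\setminus U$ and a small collar $U$ of $Z'$, then reassemble with a partition of unity. On $X'\setminus U$ the blowdown $\pi$ is a biholomorphism and $\omega_\varepsilon = \pi^*\omega+\varepsilon\lambda$ is a smooth perturbation of the Kähler form $\pi^*\omega$. Writing $\Delta_\varepsilon s_X=nDs_X\wedge\omega_\varepsilon^{n-1}/\omega_\varepsilon^n$ and $\pi^*(\Delta_X\sigma_X) = nDs_X\wedge (\pi^*\omega)^{n-1}/(\pi^*\omega)^n$, the difference may be put over a common denominator whose numerator factors through a term proportional to $\varepsilon\lambda$. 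This yields directly the pointwise and hence $L^{2,l}(X'\setminus U)$ estimate $\Delta_\varepsilon s_X - \pi^*(\Delta_X\sigma_X) = \mathcal{O}(\varepsilon)$, with constants depending on $U$.

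The substance of the proof is on the collar $U$. Using the decomposition (\ref{eq:goodcoordinates}),
$$(\Delta_\varepsilon s_X)_X = \Delta_\varepsilon s_X - \iota_*\bigl(p_0 \iota^*\Delta_\varepsilon s_X\bigr).$$
The preceding lemma determines $\iota^*\Delta_\varepsilon s_X$ modulo $\mathcal{O}(\varepsilon)$, so $p_0 \iota^*\Delta_\varepsilon s_X$ is known to the same order as a section in $\Gamma_0(Z',\End(E))$. The key claim is that the subtraction $\iota_*\bigl(p_0 \iota^*\Delta_\varepsilon s_X\bigr)$ absorbs, up to $\mathcal{O}(\varepsilon)$, the full non-basic $\mathcal{O}(1)$ behavior of $\Delta_\varepsilon s_X$ near $Z'$, while the residual basic part is $\pi^*(\Delta_X\sigma_X) + \mathcal{O}(\varepsilon)$.

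Concretely I would work in a local trivialization adapted to the fibration $\pi|_{Z'}\colon Z'\to Z$, Taylor-expanding the top-degree forms $Ds_X\wedge\omega_\varepsilon^{n-1}$ and $\omega_\varepsilon^n$ in $\varepsilon$ as in the proofs of the previous two lemmas. Writing $Ds_X=\pi^*D\sigma_X$ and using that basic $k$-forms on $X'$ restricted to $Z'$ vanish whenever $k>2m+2$, one obtains the leading orders of both forms, and the resulting quotient can be split into its basic component (which one identifies with $\pi^*\Delta_X\sigma_X + \mathcal{O}(\varepsilon)$) and a component lying in $\Gamma_0(Z',\End E)$ modulo $\mathcal{O}(\varepsilon)$. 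Continuity of the extension operator $\iota_*$ in $L^{2,l}$, together with the defining property $p_0\circ\iota^*\circ\iota_*=\mathrm{Id}$, then matches this second component to $\iota_*\bigl(p_0\iota^*\Delta_\varepsilon s_X\bigr)$ on the collar up to the same order. Partition-of-unity gluing with the interior estimate and the isometry property of $\pi^*$ on basic sections finishes the argument.

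The main obstacle is the matching step on the collar: because $\Delta_\varepsilon s_X$ and $\iota_*(p_0\iota^*\Delta_\varepsilon s_X)$ are individually only $\mathcal{O}(1)$ near $Z'$, showing that their difference is $\mathcal{O}(\varepsilon)$ forces one to identify the $\varepsilon^0$-term of $\Delta_\varepsilon s_X$ in a whole neighborhood of $Z'$ (not just on $Z'$ itself) with the pullback of $\Delta_X\sigma_X$ plus an element of $\iota_*\Gamma_0(Z',\End(E))$. This amounts to analyzing how the bump-function construction of $\iota_*$ propagates the boundary data of the previous lemma into the collar, and verifying that the basic component of the $\varepsilon^0$-expansion of $Ds_X\wedge\omega_\varepsilon^{n-1}/\omega_\varepsilon^n$ is exactly $\pi^*\Delta_X\sigma_X$ modulo higher-order terms in $\varepsilon$.
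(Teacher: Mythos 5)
Your interior estimate on $X'\setminus U$ is correct and is essentially the computation the paper itself uses. The problem is the collar: the step you yourself flag as ``the key claim'' --- that $\iota_*\bigl(p_0\circ\iota^*\Delta_\varepsilon s_X\bigr)$ absorbs, up to $\mathcal{O}(\varepsilon)$, the whole non-basic $\mathcal{O}(1)$ part of $\Delta_\varepsilon s_X$ near $Z'$, while the residual basic part is $\pi^*(\Delta_X\sigma_X)+\mathcal{O}(\varepsilon)$ --- is not an auxiliary fact you can appeal to; it is a reformulation of the lemma (together with its companion statement on the $Z$-component), and your write-up stops exactly where it would have to be established. The difficulty there is genuine: the pointwise $\varepsilon$-expansion of $\Lambda_\varepsilon$ is not uniform on a neighbourhood of $Z'$, because $\pi^*\omega$ degenerates along the fibres of $Z'\to Z$. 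Hence ``the $\varepsilon^0$-term of $\Delta_\varepsilon s_X$ on the whole collar'' is not a well-defined object that can simply be split into a basic piece plus an element of $\iota_*\Gamma_0(Z',\End(E))$: the behaviour transitions from the interior value $\pi^*\Delta_X\sigma_X$ to the boundary value $(m+1)\frac{Ds_X\wedge\omega^m\wedge\lambda^{n-m-1}}{\omega^{m+1}\wedge\lambda^{n-m-1}}$ at scale $\mathrm{dist}(\cdot,Z')\sim\sqrt{\varepsilon}$, and this must be controlled in the $L^{2,l}$ sense in which the $\mathcal{O}(\varepsilon)$ of the statement is measured. As written, your argument does not do this.

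The paper sidesteps the collar analysis by exploiting the structure of the splitting (\ref{eq:goodcoordinates}): by construction $(\Delta_\varepsilon s_X)_X=\pi^*\phi_\varepsilon$ for a section $\phi_\varepsilon\in\Gamma(X,\End(E))$, so the leading coefficient of its expansion is a single section over $X$, determined by continuity across $Z$ from its values over $X\setminus Z$; there the elementary identity
$\Delta_\varepsilon\pi^*\sigma_X=n\,D\pi^*\sigma_X\wedge\omega_\varepsilon^{n-1}/\omega_\varepsilon^n=n\,\pi^*\bigl(D\sigma_X\wedge\omega^{n-1}/\omega^n\bigr)+\mathcal{O}(\varepsilon)$
identifies that coefficient as $\Delta_X\sigma_X$. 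If you wish to keep your two-region structure, you should at minimum replace the unproved ``absorption'' claim by this identification of the pullback component through its restriction to $X'\setminus Z'$.
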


\begin{proof}
There is $\phi\in\Gamma(X,\End(E))$ such that
$(\Delta_\varepsilon s_X)_X=\pi^*\phi$.
The element $\phi$ can be identified as the lowest order term in the asymptotic expansion in $\varepsilon$ of $(\Delta_\varepsilon\pi^*\sigma_X)_X$. However, we have at $x\in X'\setminus Z'$ :
\begin{eqnarray*}
\Delta_\varepsilon\pi^*\sigma_X=n\frac{D\pi^*\sigma_X\wedge (\omega+\varepsilon\lambda)^{n-1}}{(\omega+\varepsilon\lambda)^n} = n\pi^*\frac{D\sigma_X\wedge\omega^{n-1}}{\omega^n} +\mathcal{O}(\varepsilon)
\end{eqnarray*}
so we see that the lowest order term in the expansion of $(\Delta_\varepsilon\pi^*\sigma_X)_X$ is $\Delta_{X}\sigma_X$.
\end{proof}
Summarizing the above calculations, with respect to the decomposition $s=s_X+s_Z$ produced by (\ref{eq:goodcoordinates}), the operator $\Delta_\varepsilon$ takes the form 
\begin{equation}
 \label{eq:linearopmatrix}
\left(
\begin{array}{cc}
\Delta_X & 0\\
\mathcal{L} & \varepsilon^{-1}\Delta_\mathcal{V}
\end{array}
\right)
\end{equation}
plus higher order terms, for some second order operator $\cL$. 

\section{The approximate solutions}
\label{sec:approximate}
The goal of this section is to produce approximate solutions to the HYM equations. As in last section, expressions of the form $\cO(\ep^j)$ are to be understood as holding pointwise until Section \ref{sec:perturbation}.
As from now on the methods and proofs follow very closely \cite{SekTip}. We will only sketch most of the steps, giving details when major differences occur and precisely quoting results in \cite{SekTip} otherwise. Note that in our work, the parameter $\ep$ plays the role of $k^{-1}$ in \cite{SekTip}, and in the decomposition $\Gamma(X',\End E)=\Gamma(X,\End E)\oplus \Gamma_0(Z',\End E)$, the space $\Gamma(X, \End E)$ (resp. $\Gamma_0(Z',\End E)$) plays the role of $\Gamma(B,\End (E))$ (resp. of $\Gamma_0(X,\End E)$) in \cite{SekTip}.

\subsection{Fixing the complex deformation parameters}
\label{sec:complexdeformation}
We start from a semi-stable and sufficiently smooth holomorphic vector bundle $E$ on $(X,L)$, with $L=[\om]$. Denote by $\Gr(E)=\bigoplus_{i=1}^\ell\cG_i$ the associated polystable graded object, with stable components $\cG_i$. We also assume that $\Gr(E)$ has non-isomorphic stable quotients, and that the Jordan--Holder filtration with locally-free quotients is unique. We now explain the technical implications of these hypotheses. First, by \cite[Lemma 5.8]{SekTip}, $E$ is simple. The automorphism group $G:=\Aut(\Gr(E))$ is a reductive Lie group with Lie algebra $\g:=\aut(\Gr(E))$ and compact form $K\subset G$, with $\k:=\Lie(K)$. We have
$$
\g=\bigoplus_{i,j} H^0(X, \mathrm{Hom}(\cG_i,\cG_j).
$$
As the $\cG_i$'s are all stable, and non isomorphic, we deduce that
$$
\g=\bigoplus_{i=1}^\ell \C\cdot \Id_{\cG_i},
$$
and in particular $\g$ is abelian. The upshot for us is that elements in $\k$ that will obstruct the linear theory all live on the diagonal in the matrix block decomposition induced by the decomposition $\Gr(E)=\oplus_{i=1}^\ell \cG_i$.
We let $\delb_0$ be the Dolbeault operator of $\Gr(E)$. The Dolbeault operator $\delb_E$ on $E$ is given by
$$
\delb_E=\delb_0+\gamma
$$
where $\gamma \in\Om^{0,1}(X,\Gr(E)^*\otimes \Gr(E))$
can be written
$$
\gamma=\sum_{i<j} \gamma_{ij}
$$
for (possibly vanishing) $\gamma_{ij} \in\Om^{0,1}(X,\cG_j^* \otimes \cG_i)$. From the uniqueness hypothesis on the Jordan--H\"older filtration, we deduce that the terms $\gamma_{i,i+1}$, for $1\leq i \leq \ell-1$, are all non-zero, see \cite[Lemma 5.4]{SekTip}. This will enable us to use induction on $\ell$, the number of stable components, in the construction of the approximate solutions. 

Our starting point to produce HYM connections on $\pi^*E\to (X',\om_\ep)$ will be a product Hermite--Einstein metric $h= h_1\oplus \ldots \oplus  h_\ell$ on $\Gr(E)$ (we can fix one by polystability), and the associated HYM connection $A_0$ on $(\Gr(E),\delb_0)$ with contracted curvature form 
$$
\Lambda_{\om}iF_{A_0} = c_0 \cdot \Id.
$$ 
We then introduce the $L^2$ projection 
\begin{equation}
 \label{eq:L2projection}
 \begin{array}{cccc}
  \Pi_{i\k} :& \Gamma(X, \End_H (E,  h) ) & \to & i\k\\
             &      s            & \mapsto &\displaystyle \frac{1}{\Vol(\om)}\sum_{i=1}^\ell \frac{1}{\rank(\cG_i)} ( \int_X\trace_{\cG_i}(s)\:\om^n ) \: \Id_{\cG_i}
 \end{array}
\end{equation}
and the induced orthogonal decomposition:
\begin{equation*}
 \label{eq:L2splittingonB}
 \Gamma(X,\End_H(E,h)) =i\k \oplus \Gamma_{i\k^\perp}(X,\End_H(E, h))
\end{equation*}
with respect to the pairing 
$$
(s_1,s_2)\mapsto \int_X \trace(s_1\cdot s_2)\,\om^n.
$$
Denoting by $\Delta_{\Gr(E),0}$ the Laplace operator of the Chern connection of $(\delb_0,h)$ with respect to $\om$,  from Lemma \ref{lem:linop} and self-adjointness, we have 
\begin{lemma}
 \label{lem:Deltabaseinvertible}
The following operator is invertible : 
$$
\Delta_{\Gr(E),0} :\Gamma_{i\k^\perp}(X,\End_H(E,h)) \to \Gamma_{i\k^\perp}(X,\End_H(E, h)).
$$
\end{lemma}

We will now make use of the gauge action of $G$ to calibrate the complex deformation parameter $\gamma$ with the metric variation parameter $\ep$. We gauge fix $\gamma$ by imposing
\begin{equation}
\label{eq:gaugefixgamma}
\delb_0^*\gamma=0,
\end{equation}
where the adjoint is with respect to the K\"ahler structure $\om$. Elements 
$$g:=g_1\Id_{\cG_1}+\ldots,+g_\ell \Id_{\cG_\ell}\in G,$$
for $(g_i)\in(\C^*)^\ell$, act on $\delb_E$ and produce isomorphic holomorphic vector bundles in the following way :
\begin{equation}
\label{eq:explicitaction}
g\cdot \delb_E= \delb_0 + \sum_{i<j} g_ig_j^{-1}\gamma_{ij}.
\end{equation}
We assume from now on that for $1\leq i\leq \ell-1$, 
$$
\mu_{L_\ep}(\cF_i) \underset{\ep\to 0}{<}\mu_{L_\ep}(E). 
$$
We introduce $q_i$ to be the {\it discrepancy order} of $\cF_i$ for each $i\in[\![ 1, \ell-1]\!]$, that is the order $q_i\in\N$ such that $\mu_{L_\ep}(E)-\mu_{L_\ep}(\cF_i)=\nu_i \ep^{q_i}+\cO(\ep^{q_i+1}) $ for a constant $\nu_i>0$. The {\it discrepancy order} $q$ of $E$ will be $q=\max (q_i)_{1\leq i\leq \ell-1}$.  Define now 
$\underline{m}=(m_i)_{1\leq i \leq \ell-1}$ where for all $i$, $m_i$ satisfies\footnote{The formula differs from \cite{SekTip}, where it was $2m_i+1=q_i$, as here, when contracting upon $\Lambda_\ep$, we don't obtain an extra $\ep$-term (as was the case when contracting basic terms in \cite{SekTip})}
$$
2m_i = q_i.
$$ 
We then define ${\bf g}_{\underline{\upsilon}, \underline{m}}\in G$ by\footnote{Note that our convention for the gauge action is opposite to the one in \cite{SekTip}, which accounts for the negative powers in $\ep$ here.} 
\begin{equation*}
%  \label{eq:initialgaugetransformation}
{\bf g}_{\underline{\upsilon}, \underline{m}} = 
\Id_{\cG_1} + \upsilon_1 \ep^{-m_1} \Id_{\cG_2} + \upsilon_1\upsilon_2 \ep^{-m_1-m_2}\Id_{\cG_3} +\ldots + (\Pi_{i=1}^{\ell-1}\upsilon_i) \ep^{-m_1 -\ldots-m_{\ell-1}} \Id_{\cG_\ell},
\end{equation*}
where the constants $\underline{\upsilon}=(\upsilon_i)_{1\leq i \leq \ell-1}\in(\R^*)^{\ell-1}$ will be determined soon. Denote by 
\begin{equation}
\label{eq:gammepdefinition}
\gamma_\ep:={\bf g}_{\underline{\upsilon}, \underline{m}}\cdot \gamma. 
\end{equation}
Then, the operators $\delb_E$ and $\delb_\ep=\delb_0+\gamma_\ep$ are gauge equivalent for any $\underline{\upsilon}$. We denote by $A_\ep$ the Chern connection on $(\pi^*E, \pi^*(\delb_\ep), \pi^*h)$ (and we will drop from now the $\pi^*$ to ease notations). We will consider projections onto various components of $\g$. For $\psi\in\g$ and $s=s_X+s_Z\in\Gamma(X',\End(E))=\Gamma(X,\End(E))\oplus \Gamma_0(Z',\End(E))$, we will denote by  $\Pi_{\langle \psi\rangle}(s)$ the orthogonal projection (as in Equation (\ref{eq:L2projection})) of $s_X$ onto the subspace spanned by $\psi$. Then, exactly as in \cite[Proposition 5.25]{SekTip}, using Chern-Weil theory together with the facts that the constants $\nu_i$ are all strictly positive and the $\gamma_{i, i+1}$ all non zero, we obtain :
\begin{proposition}
 \label{prop:fixingtheupsilons}
 There exist $\underline{\upsilon}=(\upsilon_1,\ldots,\upsilon_{\ell-1})\in (\R^*)^{\ell-1}$ and a positive constant $\cC$ such that for all $j\in [\![ 1, \ell-1 ]\!]$,
\begin{align*}
\Pi_{\langle \Id_{\cF_j}\rangle }  \Lambda_{\om_\ep} \left( i F_{A_\ep} \right) = \mathcal{C}\,\mu_{L_\ep} (E)\Id_{\cF_j} + O(\ep^{q_j+\frac{1}{2}}).
\end{align*}
\end{proposition}
In what follows, we assume that the constants $\underline{\upsilon}$ are fixed to satisfy the conclusion in Proposition \ref{prop:fixingtheupsilons}. The next step is to produce, for any order $p\geq q$, gauge perturbations $f_\ep\cdot A_\ep$ of $A_\ep$ satisfying
$$
\Lambda_{\om_\ep} \left( i F_{f_\ep\cdot A_\ep} \right) = \mathcal{C}\,\mu_{L_\ep} (E)\Id_E + O(\ep^{p}).
$$
\subsection{Inductive process and approximate solutions}
\label{sec:inductive}
To construct the approximate solutions to any desired order, we will perturb iteratively the connections $A_\ep$ by gauge transformations of the form $\exp(\ep^i s_i)$ and use the properties of the associated Laplace operators. We denote by $\Delta_\ep$ (resp. $\Delta_{\Gr(E),\ep}$) the associated Laplacian of $A_\ep$ (resp. of the Chern connection of $(\pi^*\delb_0,\pi^*h)$) with respect to $\omega_\ep$. We also set
$$
\Delta_X:=\Delta_{\Gr(E),0}
$$
the Laplacian of $(\delb_0,h)$ on $X$ and $\cL$ the second order operator associated to $(\delb_0,h)$ as in the matrix Expression (\ref{eq:linearopmatrix}).
\begin{proposition}
\label{prop:linleadingorder}
Under the above hypotheses, there are expansions
\begin{equation}
\label{eq:laplaceexpansion}
\Delta_\ep  = \Delta_{\Gr(E),\ep} + O(\ep),
\end{equation}
and for $s_X+s_Z\in\Gamma(X,\End E)\oplus \Gamma_0(Z',\End E)$, 
\begin{equation}
 \label{eq:laplacegradedexpansion}
 \Delta_{\Gr(E),\ep}(s_X+\ep s_Z)  =   \Delta_X (s_X) + \cL(s_X)+\Delta_\cV(s_Z) +\cO(\ep).
\end{equation}
 The same expansions also hold at a Chern connection on $\pi^* E \to X'$ coming from a complex structure $f_\ep \cdot \pi^*(\delb_0+\gamma_\ep)$ provided $f_\ep = \Id_E+ s_\ep$ for some $s_\ep \in \Gamma \left( X', \End  \pi^* E \right)$  whose $X$-component $s_{X,\ep}$ satisfies $s_{X,\ep} = O(\ep)$ and whose $Z'$-component $s_{Z, \ep}$ satisfies $s_{Z,\ep} = O(\ep^2)$.
\end{proposition}
\begin{proof}
 For the proof of (\ref{eq:laplaceexpansion}), we use the formula for the change of curvature induced by the change in complex structure $\delb_0\mapsto \delb_0+\gamma_\ep$ : 
 \begin{equation}
  \label{eq:curvaturefromchangecomplexstructure}
  F_{A_\ep}=F_{A}+d_{A_0}(\gamma_\ep-\gamma_\ep^*)+(\gamma_\ep-\gamma_\ep^*)\wedge (\gamma_\ep-\gamma_\ep^*).
 \end{equation}
 Upon contraction with $\om_\ep$, we obtain
 $$
 \Lambda_\ep (iF_{A_\ep})=\Lambda_\ep (iF_{A_0})+\Lambda_\ep \left(\del_0\gamma_\ep-\delb_0\gamma_\ep^*\right)-2\Lambda_\ep \left(\gamma_\ep\wedge\gamma_\ep^*\right).
 $$
Now, by choice of $g_{\underline{\upsilon},\underline{m}}$ and definition of $\gamma_\ep$ in Equation (\ref{eq:gammepdefinition}), we see that 
$$
\gamma_\ep=\cO(\ep^{\min (m_i)}).
$$
By definition, $m_i=\frac{q_i}{2}$. By Formula (\ref{eq:formulaAchim}), for all $i\in [\![1, \ell-1]\!]$, we have $q_i\geq n-m$, and as we are blowing-up a submanifold, we must have $m\leq n-2$. Then, $q_i \geq 2$, and thus $m_i\geq 1$ for all $i$.
Then,
$$
\gamma_\ep=\cO(\ep),
$$
which gives
$$
 \Lambda_\ep (iF_{A_\ep})=\Lambda_\ep (iF_{A_0})+\cO(\ep).
$$
Note that all the terms in $ \left(\del_0\gamma_\ep-\delb_0\gamma_\ep^*\right)-2\left(\gamma_\ep\wedge\gamma_\ep^*\right)$ are pulled back terms, and arguing as in Section \ref{sec:decomposeLaplace}, we obtain that this estimate is preserved for first order variations, which gives 
$$
\Delta_\ep  = \Delta_{\Gr(E),\ep} + O(\ep).
$$
The proof of (\ref{eq:laplacegradedexpansion}) is a direct consequence of the various lemmas in Section \ref{sec:decomposeLaplace}. Finally, we consider
the perturbed connections under gauge transformations $f_\ep$ as described in the statement of the lemma. The expansion (\ref{eq:laplaceexpansion}) is trivially preserved for such connections, as $f_\ep=\Id_E+\cO(\ep)$. Then, the $X$-part of   $ \partial_{A_\ep} \bar \partial_{A_\ep}  - \bar \partial_{A_\ep}  \partial_{A_\ep}$ changes at order $\ep$ while its $Z'$-part changes at order $\ep^2$, which is enough to compensate for the $\ep^{-1}$-contribution that comes upon restriction to $Z'$ and contraction, as in the proof of Lemma \ref{lem:exp1}. This concludes the proof.
\end{proof}
When producing the approximate HYM connections, we will use the mapping properties of $\Delta_X$ and $\Delta_\cV$. This will allow us to remove all error terms modulo elements in the cokernel of $\Delta_X=\Delta_{\Gr(E),0}$, which is $\g$. By Chern-Weil theory, those remaining terms will be controlled by the expansions of $\mu_{L_\ep}(\cF_i)$, and it is important in the argument that upon removing the other errors, they remain fixed. This is the content of the following lemma.
\begin{lemma}
 \label{lem:dontchangethetraces!}
 Let $s\in \Gamma(\End(\cG_i))$ and $j\in[\![ 1, \ell ]\!]$. Then
 $$
 \Pi_{\langle\Id_{\cF_j}\rangle} \Delta_\ep( s) = \cO(\ep^{q_j})
 $$
 and
 $$
 \Pi_{\langle\Id_{\cF_j}\rangle} \Lambda_\ep\left((\partial_\ep s-\bar \partial_\ep s)\wedge (\partial_\ep s-\bar \partial_\ep s) \right)=\cO(\ep^{q_j}).
 $$
 Let $\sigma\in \Gamma(\Hom(\cG_p,\cG_i))$, with $i< p$, and $j\in[\![ 1, \ell ]\!]$. Then
 $$
 \Pi_{\langle\Id_{\cF_j}\rangle} \Delta_\ep(\ep^{m_i+\ldots+m_{p-1}+1}\sigma) =\cO(\ep^{q_j+1}).
 $$
and
  $$
 \Pi_{\langle\Id_{\cF_j}\rangle} \Lambda_{\ep}\left(\ep^{2(m_i+\ldots+m_{p-1}+1)}\left((\partial_\ep \sigma-\bar \partial_\ep \sigma)\wedge (\partial_\ep \sigma-\bar \partial_\ep \sigma) \right)\right)=\cO(\ep^{q_j+1}),
 $$
and similarily for $p<i$.
\end{lemma}
\begin{proof}
 Note that we only need to consider the $\Gamma(X,\End E)$ components of the sections in the splitting (\ref{eq:goodcoordinates}), as we are only interested in the projection onto $\Id_{\cF_j}$. Then, the proof follows exactly as in \cite[Lemma 5.20 and Lemma 5.21]{SekTip}. 
\end{proof}
The only errors that will then remain to remove are those in $\g$. This is where we will use the following lemma, whose proof is a direct adaptation of \cite[proof of Lemma 5.23]{SekTip}.
\begin{lemma}
 \label{lem:linearisationIdis}
  For all $j\in [\![ 1, \ell-1]\!]$, there is a negative constant $a_{j,j+1}$ such that
\begin{equation}
 \label{eq:deltaactionIdisonautg}
\Pi_{\g}\Delta_\ep(\Id_{\cF_j}) =  a_{j,j+1} \ep^{q_j}\Id_{j,j+1}+\cO(\ep^{q_j+1}),
\end{equation}
where for $(p,l)\in [\![1,\ell]\!]^2$:
 $$
 \Id_{pl}=\frac{1}{\rk\,\cG_p}\Id_{\cG_p}-\frac{1}{\rk\,\cG_{l}}\Id_{\cG_{l}}.
 $$
\end{lemma}
Note that as in Proposition \ref{prop:linleadingorder}, the conclusions of Lemma \ref{lem:dontchangethetraces!} and \ref{lem:linearisationIdis} also hold for perturbed connections of the form $f_\ep\cdot A_\ep$ for $f_\ep=\Id_E+s_{\ep,X}+s_{\ep,Z}$ with $s_{\ep,X}=\cO(\ep)$ and $s_{\ep,Z}=\cO(\ep^2)$. We are now ready to prove the main proposition of this section.
\begin{proposition}
 \label{prop:approximate}
 Let $p\geq q$. Then, there exists gauge transformations $f_\ep^p=\Id_E+s_{\ep,X}^p+s_{\ep,Z}^p$ with $s_{\ep,X}^p=\cO(\ep)$ and $s_{\ep,Z}^p=\cO(\ep^2)$, and constants $c_\ep^p$, such that for all $0<\ep\ll 1$, we have 
 $$
 \Lambda_\ep\left(F_{f_\ep^p\cdot A_\ep}\right)=c_\ep^p \Id_E + \cO(\ep^p).
 $$
\end{proposition}
\begin{proof}
 The proof follows the strategy developped in \cite[Section 5.2 and Section 5.3.2]{SekTip}, so we refer to those relevant sections and will only sketch the argument.  We can expand in $\ep$ each term on the right hand side of the following identity :
  $$
 \Lambda_\ep (iF_{A_\ep})=\Lambda_\ep (iF_{A_0})+\Lambda_\ep \left(\del_0\gamma_\ep-\delb_0\gamma_\ep^*\right)-2\Lambda_\ep \left(\gamma_\ep\wedge\gamma_\ep^*\right).
 $$
 As $A_0$ is HYM, we have 
 $$
 \Lambda_\ep (iF_{A_0})=c\Id_E + \cO(\ep).
 $$
By the gauge fixing condition $\delb_0^*\gamma=\Lambda_0\del_0 \gamma=0$,  the following off-diagonal terms have entries 
 $$
 (\Lambda_\ep \left(\del_0\gamma_\ep-\delb_0\gamma_\ep^*\right))_{ip}=\cO(\ep^{m_i+\ldots+m_{p-1}+1}),
 $$
 while the remaining terms will contribute to higher orders. Following \cite[Proposition 4.9 and Section 5.2]{SekTip}, we start by perturbing inductively $A_\ep$ to remove all errors that live in $\Gamma_{i\k^\perp}(X,\End_H(E, h))\oplus\Gamma_0(Z', \End_H(E,h))$. This can be achieved by considering perturbations $\exp(\ep^i(s_X+\ep s_Z)\cdot A_\ep)$ thanks to Proposition \ref{prop:linleadingorder}, together with Lemmas \ref{lem:verticallaplacianinvertible} and \ref{lem:Deltabaseinvertible}. What remains are errors in $i\k$. By Lemma  \ref{lem:dontchangethetraces!}, and from the formula 
 $$
 \Lambda F_{f\cdot A}=\Lambda F_A + \Delta_A(f) +\Lambda \left((\del_A f - \delb_A f)\wedge (\del_A f - \delb_A f)\right),
 $$
 the previous perturbations won't affect any of the $\Id_{\cF_j}$ projections of the contracted curvatures. By proposition \ref{prop:fixingtheupsilons}, we then obtain a connection whose contracted curvature satisfies, for all $j\in [\![ 1, \ell-1 ]\!]$,
 $$
 \Lambda_{\om_\ep} \left( i F_{A_\ep} \right) = \mathcal{C}\,\mu_{L_\ep} (E)\Id_{\cF_j} + O(\ep^{q_j+\frac{1}{2}}).
$$
The end of the proof is then done by  induction on $\ell$ the number of stable components of $\Gr(E)$. As discussed in \cite[Section 5.3.2]{SekTip}, using Lemma \ref{lem:linearisationIdis}, we can remove errors in $\g$ beyond the orders $q_j$, by induction on $\ell$, and eventually obtain the result.
 \end{proof}

\section{The perturbation argument}
\label{sec:perturbation}
We keep notations from the last section. We will now prove Theorem \ref{theo:intro}, and its corollaries. Again, we follow closely \cite{SekTip}. The first step is to obtain an upper bound for the operator norm of the inverse of $\Delta_\ep$. This is where the argument from \cite{SekTip} has to be adapted, given the different geometric settup that we address. Then, based on this bound and on the construction of approximate solutions, a quantitative version of the implicit function theorem is used in Section \ref{sec:perturbing}. 

For a Riemannian metric $g$ on $X'$, let $L^2_{d} (g)$ denote the Sobolev space $W^{d,2}(X', g)$ of order $2$ and $d$ derivatives, with respect to the metric $g$. If $g$ is K\"ahler with K\"ahler form $\eta$, we may write $L^2_d (\eta)$ instead of $L^2_d (g)$. When $d=0$, we omit the subscript. For Sobolev spaces associated to $\pi^*E$ or $\End \pi^*E$, we use the metric $h$ (and the metric it induces on $\End \pi^*E$) on the bundle.

\subsection{Estimating the linearisation}
\label{sec:estimate}
Our main goal in this section is to prove:
\begin{proposition}
\label{prop:lindiagsbound}                                                    
There exists $C> 0$ such that for all $s\in \Gamma(X',\End E)$ whose trace is average zero with respect to $\om_\ep$,
\begin{align}
\label{eqn:laplacianbound}
\| \Delta_\ep (s) \|_{L^2_{d}(\omega_\ep)} \geq C \ep^q \| s\|_{L^2_{d+2}(\omega_\ep)}.
\end{align}
The same estimate also holds for the Laplacian of the perturbed connections built in Proposition \ref{prop:approximate}.
\end{proposition}
This will be done by steps. We first work out the case of a single stable component $\cG:=\cG_i\subset \Gr(E)$.
\begin{lemma}
 \label{lem:singlecomponent}
 There exists $C> 0$ such that for all $s\in \Gamma(X',\End \cG)$ that is $L^2(\om_\ep)$-orthogonal to $\Id_\cG$,
\begin{align}
\label{eqn:laplacianboundsingle}
\| \Delta_\ep (s) \|_{L^2_{d}(\omega_\ep)} \geq C \ep \| s\|_{L^2_{d+2}(\omega_\ep)}.
\end{align}
\end{lemma}

The proof of Lemma \ref{lem:singlecomponent} will require to compare various $L^2$-norms on $X'$, $X$ and $Z'$. We gather those comparisons in the following lemmas.
\begin{lemma}
 \label{lem:intermediatecomparisonnormspulledback}
There are positive constants $C$ and $C'$ independent on $\ep$ such that for any $\sigma\in \Gamma(X,\End \cG)$ (or $\sigma \in \Om^1(X,\End \cG)$), we have 
  \begin{equation}
   \label{eq:comparisonbasics}
  C \| \pi^*\sigma \|_{L^2(X',\omega_\ep)}\leq \|  \sigma \|_{L^2(X,\omega)} \leq C' \|\pi^*\sigma \|_{L^2(X',\omega_\ep)}.
  \end{equation}
\end{lemma}

\begin{proof}
  The result follows from the facts that the $2$-forms $\om_\ep$ vary in a bounded family ($\ep\in[0,\ep_0]$), and that  a pulled back section is constant on the fibers of $\pi : Z' \to Z$.
\end{proof}

\begin{lemma}
 \label{lem:intermediatecomparisonnormsextended}
 There are positive constants $C$ and $C'$ independent on $\ep$ such that for any $\sigma\in \Gamma_0(Z',\End \cG)$ (or $\sigma \in \Om^1(Z',\End \cG)$), we have 
  \begin{equation}
   \label{eq:comparisonextended}
  C \| \iota_*\sigma \|_{L^2(X',\omega_\ep)}^2 \leq    \|  \sigma \|_{L^2(Z',\omega_\ep)}^2 \leq C'\| \iota_*\sigma \|_{L^2(X',\omega_\ep)}^2 .
  \end{equation}
\end{lemma}

\begin{proof}
 This can be done by a local and then patching argument. We may fix $W\subset X'$ an open set with $Z'\subset W$, and such that  $\iota_*\sigma$ vanishes away from $W$. Then, 
$$
\|  \iota_*\sigma \|_{L^2(X',\omega_\ep)}^2=\int_W \|  \iota_*\sigma \|_\ep^2 \: \om_\ep^n.
$$
We may also consider an open finite covering of $W$ by sets $U$ of the form $U\simeq B(0,1)\times V$ where the sets $V\subset Z'$ cover $Z'$ and such that $\iota_*\sigma$ is given locally by $\rho\cdot \sigma$ for a bump function $\rho$ on $B(0,1)$. Then, from the facts that $\ep$ varies in a compact family, and that for any $v\in V$, the metrics $(\om_\ep(b,v))_{b\in B(0,1)}$ on $B(0,1)\times \lbrace v \rbrace$ are mutually bounded, by applying Fubini's theorem, we obtain bounds :
$$
C\int_U \|  \iota_*\sigma \|_\ep^2 \: \om_\ep^n \leq \int_V \|  \sigma \|_\ep^2 \: \om_\ep^{n-1} \leq C' \int_U \|  \iota_*\sigma \|_\ep^2 \: \om_\ep^n.
$$
% A similar argument works for elements in  $\Om^1(Z',\End \cG)$, and this provides the required missing inequalities.
A patching argument then provides the result.
\end{proof}

\begin{lemma}
 \label{lem:comparisonmixedterms}
 There exists a positive constant $C$ independent on $\ep$ such that for any $s_X\in \Gamma(X,\End \cG)$ and $s_Z\in \Gamma_0(Z',\End \cG)$, we have
 $$
 \vert \langle s_X, s_Z \rangle_{L^2(\om_\ep)}\vert\leq C\ep^3  \vert\vert s_X\vert\vert_{L^2(\om_\ep)}\,\vert\vert s_Z\vert\vert_{L^2(\om_\ep)} .
  $$
\end{lemma}

\begin{proof}
We keep the notations from the proof of Lemma \ref{lem:intermediatecomparisonnormsextended}.
  Note that we can restrict to the open set $W$ as $s_Z$ vanishes away from $W$. Then, locally, we estimate 
  $$
  \int_U \trace(s_X\cdot s_Z) \: \om_\ep^n.
  $$
  Take coordinates $(b,f,v)\in B(0,1)\times F \times V_Z$ so that $U\simeq B(0,1)\times F \times V_Z$ where this time $F$ denote fibers of $\pi : Z' \to Z$, $V_Z\subset Z$ is an open set in $Z$ and $B(0,1)$ stands for normal coordinates. We may assume that $s_X$ is independent on $f\in F$ : 
  $$
  s_X=s_X(b, v)
  $$
  and $s_Z$ is of the form 
  $$
  s_Z=\rho(b) s_Z(f,w)
  $$
  for $\rho$ some cut off function used to produce the extension operator. Then, we compute with Fubini's :
  $$
  \begin{array}{ccc}
    \displaystyle \int_U \trace(s_X\cdot s_Z) \: \om_\ep^n & = &  \displaystyle \int_{B\times F\times V_Z}\rho(b)\,\trace(s_X(b,w)\cdot s_Z(f,w)) \,\om_\ep^n\\
     & = & \displaystyle \int_{B\times V_Z}\rho(b)\int_F \trace(s_X(b,w)\cdot s_Z(f,w))\, \om_\ep^n.
  \end{array}
  $$
  But to higher order in $\ep$, 
  $$
  \displaystyle \int_F \trace(s_X(b,w)\cdot s_Z(f,w))\, \om_\ep^n =$$
  $$
   \displaystyle \ep^{n-m-1}\om^{m+1}\int_F \trace(s_X(b,w)\cdot s_Z(f,w))\, \lambda^{n-m-1}
    +\cO(\ep^{n-m})
  $$
  and as $s_Z$ is of average zero w.r.t. $\lambda^{n-m-1}$, we get
   $$
  \int_F \trace(s_X(b,w)\cdot s_Z(f,w)) \om_\ep^n=\cO(\ep^{n-m}),
  $$
  which is $\cO(\ep^3)$ as $\codim(Z)=n-m\geq 3$. The result then follows from Cauchy-Schwarz inequality, for a constant $C$ that depends on $\om, \lambda$ and $\ep_0$.
  \end{proof}

\begin{proof}[Proof of Lemma \ref{lem:singlecomponent}]
 The positive constants $C, C', C_i$ used  in the proof might vary at several stages. We follow the strategy from \cite[Section 4.2]{SekTip}. The key to obtain this estimate is the analogue of \cite[Lemma 4.12]{SekTip}, as the rest of the argument follows as in \cite[Section 4.2]{SekTip}. So our goal is to obtain the following Poincar\'e type inequality:
 there exists $C >0$ such that for any $\ep\in [0, \ep_0]$, and all $s\in \Gamma(X',\End \cG)$ that is $L^2(\om_\ep)$-orthogonal to $\Id_\cG$, we have :
 $$
 \| d_{A_\ep} (s) \|_{L^2(\omega_\ep)}^2 \geq C \ep \| s\|_{L^2(\omega_\ep)}^2.
 $$
 As $A_\ep=A_0+\cO(\ep)$, it is actually enough to obtain this Poincar\'e inequality for $A_0$:
  $$
 \| d_{A_0} (s) \|_{L^2(\omega_\ep)}^2 \geq C \ep \| s\|_{L^2(\omega_\ep)}^2.
 $$
 We will then prove this in three steps. First, for sections $s\in \Gamma(X,\End \cG)$, then for sections $s\in \Gamma_0(Z', \End \cG)$ and last for sums of such sections.
 
  {\it Step 1 :}   For $\sigma\in\Gamma(X,\End \cG)$ and $s=\pi^*\sigma\in\Gamma(X^\prime,\End \cG)$, let 
\begin{eqnarray*}
\alpha &=& \frac{1}{\rank\,{\cG}}\,\frac{1}{\mathrm{vol}(X,\omega)}\int_X \trace(\sigma)\, \omega^n,\\
\alpha^\prime &=& \frac{1}{\rank\,{\cG}}\,\frac{1}{\mathrm{vol}(X^\prime,\omega_\varepsilon)}\int_{X^\prime} \trace(s)\, \omega_\varepsilon^n.
\end{eqnarray*}
Then, $s$ is orthogonal to $\Id$ on $X^\prime$ if and only if $\alpha^\prime=0$. Moreover, 
\begin{eqnarray*}
\alpha^\prime-\alpha =\cO(\varepsilon)\int_X \trace(\sigma)\,\omega^n.
\end{eqnarray*} 
Then, noting that $d_{A_0}\sigma=d_{A_0}(\sigma-\alpha \Id)$ as $\Id$ is parallel, we have
\begin{eqnarray*}
\|d_{A_0}(\pi^*\sigma)\|_{L^2(X',\omega_\varepsilon) }^2 &\geq & C_1\|\pi^*d_{A_0}\sigma\|_{L^2(X',\omega_\varepsilon)}^2,\\
&\geq & C_2\| d_{A_0}\sigma\|_{L^2(X,\omega)}^2,\\
&\geq& C_3 \|\sigma-\alpha\Id\|_{L^2(X,\omega)}^2,\\
&\geq & C_4 \|\sigma\|_{L^2(X,\omega)}^2,\ \ \\
&\geq & C_5\| s\|_{L^2(X',\omega_\varepsilon)}^2,
\end{eqnarray*}
where the second and fifth inequalities follow from Lemma \ref{lem:intermediatecomparisonnormspulledback}, the third one from Poincar\'e inequality for $A_0$ on $(X,\om)$, and the fourth inequality follows since $\alpha$ is  $\mathcal{O}(\varepsilon\|\sigma\|_{L^2(\omega)})$.
  
{\it Step 2 :} We next consider $s=\iota_*\sigma\in \Gamma_0(Z', \End \cG)$.  We set this time $\alpha^{\prime\prime}$ to be the constant given by the $L^2(Z',\om_\ep)$-orthogonal projection of $\sigma$ onto $\Id_E$ :
\begin{eqnarray*}  
\alpha^{\prime\prime} &=& \frac{1}{\mathrm{rank}\,\mathcal{G}\, \mathrm{vol}(Z^\prime,\omega_\varepsilon)}\int_{Z^\prime} \trace(\sigma)\,\omega^{n-1}_\varepsilon.  
\end{eqnarray*}  
  In general, for a given $\sigma\in\Gamma(Z^\prime,\End \cG)$, $\alpha^{\prime\prime}=\mathcal{O}(1)$ with respect to $\varepsilon$. However, by definition of the subspace $\Gamma_0(Z^\prime,\End \cG)\subset \Gamma(Z^\prime,\End \cG) $, since $\sigma\in\Gamma_0(Z^\prime,\End \cG)$ is orthogonal to the identity on all fibres of $\pi: Z'\to Z$, an argument similar to that of Lemma \ref{lem:comparisonmixedterms}, using the same system of coordinates $(b,f,v)\in B(0,1)\times F\times V_Z$, shows that $\alpha^{\prime\prime}=\mathcal{O}(\varepsilon\|\sigma\|_{L^2(Z',\omega_\ep)})$. Then we find positive constants $C_i$ such that
\begin{eqnarray*}
   \| d_{A_0} (\iota_*\sigma) \|_{L^2(\omega_\ep)}^2 & \geq &  C_1 \| \iota_*(d_{A_0} \sigma) \|_{L^2(X',\omega_\ep)}^2\\
    & \geq & C_2 \| d_{A_0} \sigma \|_{L^2(Z',\omega_\ep)}^2\\
     & \geq & \ep\: C_3 \|  \sigma -\alpha^{\prime\prime}\mathrm{Id} \|_{L^2(Z',\omega_\ep)}^2\\
     & \geq & \ep\: C_4 \| \sigma \|_{L^2(Z',\omega_\ep)}^2,\\
      & \geq & \ep\: C_5 \|  \iota_*\sigma \|_{L^2(X',\omega_\ep)}^2,
\end{eqnarray*}
  where this time the first inequality follows from construction of the operator $\iota_*$, the second and last inequalities come from Lemma \ref{lem:intermediatecomparisonnormsextended}, the third one follows from the corresponding \cite[Lemma 4.12]{SekTip}, and the fourth one from the fact that $\alpha^{\prime\prime}=\mathcal{O}(\varepsilon\|\sigma\|_{L^2(Z',\omega_\ep)})$.

  {\it Step 3 :}  We need now to obtain similar estimates for sections $s=s_X+s_Z$. Note however that the splitting of (\ref{eq:goodcoordinates}) is {\it not} orthogonal with respect to the $L^2(\om_\ep)$ inner product.  What remains is to estimate 
  $$
  \langle d_{A_0} s_X, d_{A_0} s_Z \rangle_{L^2(\om_\ep)}=\langle \Delta_{A_0,\ep}(s_X),  s_Z \rangle_{L^2(\om_\ep)}=\langle s_X,  \Delta_{A_0,\ep}(s_Z) \rangle_{L^2(\om_\ep)},$$
  for $\Delta_{A_0,\ep}$ the Laplacian of $A_0$ with respect to $\om_\ep$. 
   We use the expansion of $\Delta_{\ep}$ from Section \ref{sec:decomposeLaplace}. From Lemma \ref{lem:exp3}, Lemma \ref{lem:exp4} and formula (\ref{eq:linearopmatrix}) :
  $$
  \langle \Delta_{A_0,\ep}(s_X),  s_Z \rangle_{L^2(\om_\ep)}=\langle \Delta_X s_X, s_Z \rangle_{L^2(\om_\ep)}+  \langle \cL( s_X), s_Z \rangle_{L^2(\om_\ep)}+\cO(\ep)
  $$
  Then, from Lemma \ref{lem:comparisonmixedterms}, 
  $$
   \vert \langle \Delta_X s_X, s_Z \rangle_{L^2(\om_\ep)} \,\vert  \leq   C_1\, \ep^3 \vert\vert \Delta_X s_X \vert\vert_{L^2(\om_\ep)}\,\vert\vert s_Z\vert\vert_{L^2(\om_\ep)}.
   $$
  Setting $s_X=\pi^*\sigma$, and using Lemma \ref{lem:intermediatecomparisonnormspulledback}, together with the continuity of the Green operator of $\Delta_X$, we obtain
 \begin{eqnarray*}
  \vert \langle \Delta_X s_X, s_Z \rangle_{L^2(\om_\ep)} \,\vert    & \leq & C_2\,\ep^3 \vert\vert \Delta_X \sigma \vert\vert_{L^2(\om)}\,\vert\vert s_Z\vert\vert_{L^2(\om_\ep)},\\
      & \leq & C_3\,\ep^3 \vert\vert  \sigma \vert\vert_{L^2(\om)}\,\vert\vert s_Z\vert\vert_{L^2(\om_\ep)},\\
        & \leq & C_4\,\ep^3 \vert\vert s_X \vert\vert_{L^2(\om_\ep)}\,\vert\vert s_Z\vert\vert_{L^2(\om_\ep)},
 \end{eqnarray*}
 where again we dealt with the $L^2(X,\omega)$-projection of $\sigma$ onto $\Id_\cG$ as in Step 1.
For the term $  \langle \cL (s_X), s_Z \rangle_{L^2(\om_\ep)}$, we use local coordinates $(b,f,v)\in B\times F \times V_Z$ as in the proof of Lemma \ref{lem:comparisonmixedterms}. Then, the expression of $\cL$ can be locally written 
\begin{eqnarray*}
  \cL(s_X) & = & \iota_*\circ p_0\circ \iota^*\left((m+1)\frac{Ds_X\wedge\omega^m\wedge\lambda^{n-m-1}}{\omega^{m+1}\wedge\lambda^{n-m-1}}\right)\\
 & = & (m+1)\frac{D_b s_X\wedge\omega^m\wedge\lambda^{n-m-1}}{\omega^{m+1}\wedge\lambda^{n-m-1}}
\end{eqnarray*}
  where $D_b$ stands for derivatives in the $B(0,1)$-direction. We then compute the local contribution on $U$ of the term $\langle \cL (s_X), s_Z \rangle_{L^2(\om_\ep)}$ :
  $$
  \int_U \trace(\cL (s_X)\cdot s_Z) \: \om_\ep^n=\int_{B\times F\times V_Z}\rho(b)\,\trace(\cL(s_X(b,w))\cdot s_Z(f,w)) \,\om_\ep^n,
  $$
  and by Fubini's, Stokes theorem, and integration by parts, the relevant higher order term in $\ep$ is 
  $$
  \int_U D_b(\rho(b))\, \trace (s_X \cdot s_Z) \om_\ep^n,
  $$
  which is $\cO(\ep^3\,\vert\vert s_X\vert\vert_{L^2(\om_\ep)}\,\vert\vert s_Z\vert\vert_{L^2(\om_\ep)})$ by a similar argument as in Lemma \ref{lem:comparisonmixedterms}. Gathering those estimates, we see that the mixed terms satisfy 
  $$\langle d_{A_0} s_X, d_{A_0} s_Z \rangle_{L^2(\om_\ep)}=\langle \Delta_{A_0,\ep}(s_X),  s_Z \rangle_{L^2(\om_\ep)}=\cO(\ep^3\,\vert\vert s_X\vert\vert_{L^2(\om_\ep)}\,\vert\vert s_Z\vert\vert_{L^2(\om_\ep)}),$$
  and together with steps 1 and 2, using Lemma \ref{lem:comparisonmixedterms} again, we obtain 
 $$
 \| d_{A_0} (s_X+s_Z) \|_{L^2(\omega_\ep)}^2 \geq C \ep \| s_X+s_Z\|_{L^2(\omega_\ep)}^2.
 $$
 
Then, from this Poincar\'e in\'equality, the result follows as in \cite[Section 4.2]{SekTip}, using an analogous uniform Schauder estimate as in \cite[Proposition 4.13]{SekTip}. The latter estimate can be obtained by patching local Schauder estimates (as in \cite[Lemma 4.14]{SekTip}) that are easily derived away from the exceptional divisor, and can be obtained around a point in the exceptional divisor by adapting \cite[Proof of Lemma 4.14]{SekTip} using local coordinates as in Lemma \ref{lem:comparisonmixedterms}.
\end{proof}

Once this bound settled, we can deal with the general case.

\begin{proof}[Proof of Proposition \ref{prop:lindiagsbound}]
 The proof for the estimate (\ref{eqn:laplacianbound}) can be established exactly as in \cite[Proposition 5.27]{SekTip}, following \cite[Lemma 5.28, Lemma 5.29]{SekTip}.
 First, to obtain the estimate for elements in $\g^\perp$,
  one uses the full expansion in the linearisation :
 \begin{equation}
 \label{eq:fullexpansionlaplace}
\begin{array}{ccc}
 \Delta_\ep & = & \Delta_{\Gr(E),\ep}\\
          & + &i \Lambda_{\ep} \left(\delb_0 ([\gamma_{\ep}^*,\cdot ])-[\gamma_{\ep}^*,\delb_0 \cdot ]\right)\\
          & + & i \Lambda_{\ep} \left(\partial_0([\gamma_{\ep},\cdot])-[\gamma_{\ep},\partial_0\cdot ] \right) \\
& + & i \Lambda_{\ep} ( [\gamma_{\ep}, [\gamma_{\ep}^*, \cdot ] ] - [\gamma_{\ep}^*, [\gamma_{\ep}, \cdot ] ] ).
 \end{array}
\end{equation}
As $\gamma_\ep$ is a pulled back term, arguing as in Section \ref{sec:decomposeLaplace}, we see that the $X$ and $Z'$ contributions of the operator $\Lambda_{\ep} \left(\partial_0([\gamma_{\ep},\cdot])\right)$ in the splitting (\ref{eq:goodcoordinates}) will be of the same order as $\gamma_\ep$. As argued before, from the choice of $g_{\underline{\upsilon},\underline{m}}$ and definition of $\gamma_\ep$ in Equation (\ref{eq:gammepdefinition}), we see that 
$$
(\gamma_\ep)_{ip}=\cO(\ep^{m_i}).
$$
Now, $m_i=\frac{q_i}{2}\geq \frac{\mathrm{codim}(Z)}{2}$ by Formula (\ref{eq:formulaAchim}). Arguing similarily for the other terms in (\ref{eq:fullexpansionlaplace}), we see that the contributions from $\Delta_\ep -  \Delta_{\Gr(E),\ep}$ will all come at order at least $\ep^\frac{\mathrm{codim}(Z)}{2}$, and will be absorbed by the estimate in Lemma \ref{lem:singlecomponent}, because we assumed $\codim(Z)\geq 3$. Arguing as in \cite[Lemma 5.28]{SekTip}, we obtain the bound :
\begin{align*}
\| \Delta_\ep (s) \|_{L^2_{d}(\omega_\ep)} \geq C \ep \| s\|_{L^2_{d+2}(\omega_\ep)},
\end{align*}
for $s \in \g^\perp$. Then, from here, the proof for the estimate for sections in $\g$, or for sums of sections, follows as in \cite[proof of Lemma 5.29 and proof of Proposition 5.27]{SekTip}. Finally, the result for perturbed connections follows as in \cite[Proposition 5.30]{SekTip}.
\end{proof}

\subsection{Perturbing to solutions}
\label{sec:perturbing}

To conclude the proof of Theorem \ref{theo:intro}, we refer to \cite[Section 4.2 and Section 5.4.2]{SekTip}. It relies on a quantitative version of the implicit function theorem, applied to the operator
$$
\begin{array}{ccccc}
 \Psi_{p,\ep} : &L^2_{d+2}(X',\om_\ep)\times \R & \to & L^2_{d}(X',\om_\ep)\\
 &(s,\alpha) & \mapsto & i\Lambda_{\om_\ep}(F_{\exp(s)\cdot f_\ep^p\cdot A_\ep})-\alpha \Id_E,
\end{array}
$$
where $f_\ep^p\cdot A_\ep$ are the connections built in Proposition \ref{prop:approximate}. Proposition \ref{prop:approximate} is the analogue of \cite[Proposition 4.9 and Section 5.3.2]{SekTip} where approximate solutions to $\Psi_{p,\ep}=0$ are constructed, while Proposition \ref{prop:lindiagsbound} plays the role of \cite[Propositions 5.27 and 5.30]{SekTip} and provides the required estimate on the linearisation of $\Psi_{p,\ep}$, for $p$ large enough, at the approximate solutions. Then, the implicit function theorem, as stated in  \cite[Theorem 4.10]{SekTip}, enables to conclude the existence of zeros for $\Psi_{p,\ep}$, for $p$ large and $\ep$ small, which ends the proof of Theorem \ref{theo:intro}. 

\subsection{Proof of the corollaries}
\label{sec:corol}
We comment now on the various corollaries stated in the introduction. First, Corollary \ref{cor:stablecase} is a direct application of Theorem \ref{theo:intro}, where $E=\Gr(E)$ as a single stable component. Corollary \ref{cor:restrictionnumericalcriterion} also follows directly, using Formula (\ref{eq:formulaAchim}). What remains is to show Corollary \ref{cor:semistablecase}. The only remaing case to study is when for all $i\in[\![1,\ell-1]\!]$,  $\mu_{L_\ep}(\cF_i)\underset{\ep\to 0}{\leq}\mu_{L_\ep}(E)$, with at least one equality. This case can be dealt with exactly as for \cite[Corollary 5.2]{SekTip}, proved in \cite[Section 5.5]{SekTip}.

\bibliographystyle{plain}	
%===============================================================================
 \bibliography{ClaSekTip}
%===============================================================================

\end{document}